\newcommand{\pvint}{\mathop{\mathrm PV}\!\int}
\theoremstyle{plain}
\newtheorem{theorem}{Theorem}
\newtheorem{proposition}[theorem]{Proposition}
\newcommand\reviewresponse[1]{%
  \ifthenelse{\isundefined{\DIFaddtex}}%
  {}%
  {\marginpar{\begin{tcolorbox}[left=1mm,right=1mm]\tiny #1\end{tcolorbox}}}%
}
\begin{document}

\title{Integral Equation Methods for the Morse-Ingard~Equations}

\author[myaddress]{Xiaoyu Wei}
\ead{xywei@illinois.edu}

\author[myaddress]{Andreas Kl{\"o}ckner\corref{mycorrespondingauthor}}
\cortext[mycorrespondingauthor]{Corresponding author}
\ead{andreask@illinois.edu}

\author[bayloraddress]{Robert C.~Kirby}
\ead{Robert\_Kirby@baylor.edu}

\address[myaddress]{Department of Computer Science, University of Illinois at
Urbana-Champaign,\\ Champaign, Illinois, United States}

\address[bayloraddress]{Department of Mathematics, Baylor University, Waco, Texas, United States}

\begin{abstract}
    We present two (a decoupled and a coupled) integral-equation-based methods
    for the Morse-Ingard equations subject to Neumann boundary conditions on
    the exterior domain.  Both methods are based on second-kind integral
    equation (SKIE) formulations. The coupled method is well-conditioned and
    can achieve high accuracy. The decoupled method has lower computational
    cost and more flexibility in dealing with the boundary layer; however, it
    is prone to the ill-conditioning of the decoupling transform and cannot
    achieve as high accuracy as the coupled method. We show numerical examples
    using a Nyström method based on quadrature-by-expansion (QBX) with
    fast-multipole acceleration.  We demonstrate the accuracy and efficiency of
    the solvers in both two and three dimensions with complex geometry.
\end{abstract}

\begin{keyword}
The Morse-Ingard Equations\sep
Fast Multipole Method\sep
Integral Equation Method\sep
Quadrature-by-Expansion
\end{keyword}

\maketitle

\section{Introduction}
\label{sec:introduction}

The Morse-Ingard equations are time-harmonic, steady-state equations derived
from the linearized Navier-Stokes equations \cite{kaderli_analytic_2017, morse_theoretical_1987}. They model the pressure and the temperature variations
of a fluid due to a heat source inside the fluid, in a regime where both the
acoustic wavelength and the thermal boundary layer thickness are of interest.
Specifically, we consider the following nondimensionalized Morse-Ingard
equations from \cite{kaderli_analytic_2017}: in the fluid domain $D^c$ with
\(D\subset\mathbb{R}^d\) bounded,
\begin{equation}
\left\{
\begin{aligned}
    & \Omega \nabla^2 T + i T - i \frac{\gamma - 1}{\gamma} P = S,\\
& (1 - i \gamma \Lambda) \nabla^2 P +
  \left[ \gamma\left( 1 - \frac{\Lambda}{\Omega}\right) + \frac{\Lambda}{\Omega} \right] P -
  \gamma\left( 1- \frac{\Lambda}{\Omega}\right) T = - i
  \gamma \frac{\Lambda}{\Omega} S,
\end{aligned}
\right.
\label{eqn:morse-ingard-pde}
\end{equation}
where $P$ is the pressure field, $T$ is the temperature field,
$S$ is the heat source, and $\Omega,\Lambda,\gamma$ are dimensionless
parameters.

The application that motivated this work is to model trace gas sensors
that utilize optothermal and photoacoustic and effects to aid designing such
sensors. The {\bf q}uartz-{\bf e}nhanced {\bf p}hoto{\bf a}coustic {\bf s}pectroscopy
(QEPAS) sensor, for example, employs a quartz tuning fork to detect
via the piezoelectric effect the acoustic pressure waves that are generated
when optical radiation from a laser is periodically absorbed by molecules of a
trace gas \cite{kosterev_quartzenhanced_2002, kosterev_applications_2005}. The
{\bf r}esonant {\bf o}pto{\bf t}hermo{\bf a}coustic {\bf de}tection (ROTADE)
sensor, on the other hand, uses the same tuning fork to detect the thermal
diffusion wave via the indirect pyroelectric effect
\cite{kosterev_resonant_2010}.
An efficient and accurate solver for the Morse-Ingard equations on the exterior
domain can be a key tool for the modeling and design of both QEPAS and ROTADE
sensors \cite{kaderli_analytic_2017,petra_modeling_2011,safin_modeling_2018}.

To suit this application, we consider
\reviewresponse{Reviewer 2, comment 1}
the thermoacoustic scattering problem, where
the {\it scattered waves} obey the homogeneous Morse-Ingard equations on
the exterior domain (\(S=0\)), coupled with sound-hard boundary conditions in
pressure and continuity in heat flux, leading to Neumann boundary conditions in
both $T$ and $P$,
\begin{equation}
\left\{
\begin{aligned}
& \left.\frac{\partial T}{\partial n}\right\rvert_{\partial D} = g_T, \\
& \left.\frac{\partial P}{\partial n}\right\rvert_{\partial D} = g_P,
\end{aligned}
\right.
\label{eqn:morse-ingard-bc-omega}
\end{equation}
where the volumetric source $S$ in \eqref{eqn:morse-ingard-pde}
is accounted for by the {\it incoming waves} it induces
in the boundary conditions above.

For solution uniqueness, we also require
\begin{equation}
T(x) < \infty \text{ and } P(x) < \infty,
\quad \text{when } |x| \rightarrow \infty.
\label{eqn:morse-ingard-bc-infty}
\end{equation}
As shown in \cite{safin_modeling_2018}, \eqref{eqn:morse-ingard-bc-infty}
ensures solution uniqueness by ruling out unphysical waves from the infinity,
similar to what the Sommerfeld radiation condition does for the Helmholtz
equation. In our applications, the wave number has positive imaginary part.
Therefore, boundedness at infinity is sufficient. For general wave numbers, we
also provide a direct generalization of the Sommerfeld condition in Section
\ref{sec:sommerfeld}.

In this paper, we propose two integral-equation-based methods for
\labelcref{eqn:morse-ingard-pde,eqn:morse-ingard-bc-omega,eqn:morse-ingard-bc-infty}.
For both methods, we derive representations of the solution in terms of layer potentials
involving unknown densities, leading to integral equations that can be reduced to the form
of a Fredholm integral equation of the second kind
\begin{equation}
    (I - A)\rho = f,
    \label{eqn:skie0}
\end{equation}
where $A$ is a compact integral operator;
therefore, our methods are all based on second-kind integral equation (SKIE) formulations.
SKIEs are attractive because both the condition number and the number of iterations required
for iterative solvers like GMRES \cite{saad_gmres_1986a} are bounded by constant when refining the mesh.  In fact,~\cite{moret_note_1997} gives rigorous superlineary GMRES convergence estimates in this case.
The first method is based on a direct SKIE formulation to the
original equations using the free-space Green's functions.  While deriving the
analytic formula for the free-space Green's functions, we also obtain a
decoupling transform that converts the problem into two decoupled Helmholtz
equations, with decoupled Neumann data.  Our second method takes advantage of
this transform and uses SKIEs for the Helmholtz equations to obtain the
solution.  As such we refer to the first method {\it the coupled method} and
the second {\it the decoupled method}. We solve the SKIEs using a Nyström method
with GMRES, which requires evaluating layer potentials at the boundary using a
suitable singular quadrature scheme.  For our numerical experiments, we use
quadrature-by-expansion (QBX) with fast-multipole acceleration
\cite{klockner_quadrature_2013, wala_fast_2018}. Consequently, both methods have
linear complexity with respect to the number of degrees of freedoms.  It is
noteworthy that our methods are agnostic of the singular quadrature
scheme.  Our methods are also capable of using high order discretization and
can handle complex geometries. We demonstrate this through numerical examples
in two and three dimensions.


The rest of this paper is organized as follows. We first present the derivation
for the free-space Green's functions and the decoupling transform in Section
\ref{sec:modes}. Then we give the SKIE formulations for both the decoupled and
coupled methods in Section \ref{sec:skie}, and present some details of our
numerical implementation in Section \ref{sec:method}. After that, we present
the numerical results in Section \ref{sec:results}, and give some concluding
discussion in Section \ref{sec:conclusion}.

\section{Analysis of the problem}
\label{sec:modes}

\subsection{Thermal and acoustic modes}
In order to obtain an integral equation formulation, we require the free-space
Green's function for the Morse-Ingard equations (\ref{eqn:morse-ingard-pde}).
Following the derivation for the analytic solution to the Morse-Ingard
equations in a cylindrically symmetric geometry in \cite{kaderli_analytic_2017},
we first identify the eigenmodes by looking for particular solutions where \(T\)
is an eigenfunction of \(\nabla^2\), s.t. \(\nabla^2T = -k^2 T\).
Substituting into the first equation of \eqref{eqn:morse-ingard-pde} yields
\begin{equation}
P = \frac{\gamma}{\gamma - 1} [(1 + i \Omega k^2) T + i S].
\label{eqn:pressure-mode-ansatz}
\end{equation}
Consider the homogeneous case by letting \(S = 0\), then \(P = mT\) is also an
eigenfunction of \(\nabla^2\), where the constant $m = \frac{\gamma}{\gamma - 1}
(1 + i \Omega k^2)$.  Substituting \eqref{eqn:pressure-mode-ansatz} and \(P=mT\)
into \eqref{eqn:morse-ingard-pde} yields
\begin{equation}
\label{eqn:quadratic-k}
(1 - i \gamma \Lambda) (-k^2 mT) + \left[ \gamma\left( 1 -
\frac{\Lambda}{\Omega}\right) + \frac{\Lambda}{\Omega} \right] (mT) -
\gamma\left( 1- \frac{\Lambda}{\Omega}\right) T = 0.
\end{equation}
For (\ref{eqn:quadratic-k}) to have nontrivial solution, the coefficient of $T$ must vanish,
so that
\begin{equation}
(i\Omega + \gamma \Omega \Lambda)k^4 +
(1 - i \gamma \Omega - i \Lambda) k^2 - 1= 0.
\label{eqn:eigen-algebraic}
\end{equation}
Let $Q$ to be a complex constant such that $Q^2 = 4(i\Omega + \gamma \Omega
\Lambda) + (1 - i \gamma \Omega - i \Lambda)^2$.  Based on physical interpretation,
we classify the roots of \eqref{eqn:eigen-algebraic} into two groups:
\begin{enumerate}
\item \(k_t\) corresponding to the thermal modes that attenuate rapidly:
\begin{equation}
k_t^2 = \frac{i}{2\Omega} \left(\frac{1 - i\gamma\Omega - i\Lambda + Q}{1 -
i\gamma\Lambda}\right), \quad m_t := \frac{\gamma}{\gamma - 1}(1 + i\Omega
k_t^2).
\end{equation}
\item \(k_p\) corresponding to the acoustic modes that attenuate slowly:
\begin{equation}
k_p^2 = \frac{i}{2\Omega} \left(\frac{1 - i\gamma\Omega - i\Lambda - Q}{1 -
i\gamma\Lambda}\right), \quad m_p := \frac{\gamma}{\gamma - 1}(1 + i\Omega
k_p^2).
\end{equation}
\end{enumerate}

Since the eigenfunctions of \(\nabla^2\) form a basis of \(H^1\), we obtain the
fundamental set of solutions to the homogeneous problem under radial
symmetry, denoted $U_d$, $(d=2,3)$,
\begin{equation*}
  \begin{bmatrix} T(r) \\ P(r) \end{bmatrix} \in U_d.
\end{equation*}
In two dimensions (\(d=2\)),
\begin{equation}
    U_2 = \operatorname{span}\left\{
    \begin{bmatrix} J_0(k_pr) \\ m_pJ_0(k_pr) \end{bmatrix},
    \begin{bmatrix} H^{(1)}_0(k_pr) \\ m_pH^{(1)}_0(k_pr) \end{bmatrix},
    \begin{bmatrix} J_0(k_tr) \\ m_tJ_0(k_tr) \end{bmatrix},
    \begin{bmatrix} H^{(1)}_0(k_tr) \\ m_tH^{(1)}_0(k_tr) \end{bmatrix}
    \right\},
\end{equation}
where \(J_0\) is the Bessel function of the first kind of order zero,
\(H^{(1)}_0\) is the Hankel function of the kind of order zero.
Similarly, in three dimensions (\(d=3\)),
\begin{equation}
    U_3 = \operatorname{span}\left\{
    \begin{bmatrix} j_0(k_pr) \\ m_pj_0(k_pr) \end{bmatrix},
    \begin{bmatrix} h^{(1)}_0(k_pr) \\ m_ph^{(1)}_0(k_pr) \end{bmatrix},
    \begin{bmatrix} j_0(k_tr) \\ m_tj_0(k_tr) \end{bmatrix},
    \begin{bmatrix} h^{(1)}_0(k_tr) \\ m_th^{(1)}_0(k_tr) \end{bmatrix} \right\},
\end{equation}
where \(j_0\) is the spherical Bessel function of the first kind of order zero,
\(h^{(1)}_0\) is the spherical Bessel function of the third kind of order zero.
We also note that \(h_0^{(1)}\) has simple closed form
\begin{equation}
h_0^{(1)}(r) = j_0(r) + iy_0(r) = \frac{\sin r}{r} - i \frac{\cos r}{r}
= \frac{-i}{r} e^{ir}.
\end{equation}
Note that the choice of basis is not unique. We deliberately chose the above
basis functions so that the condition in \eqref{eqn:morse-ingard-bc-infty} can be
easily enforced.

\subsection{The decoupled equations}
\label{sec:decouple}
From the above radial symmetry solutions, it is obvious that the Morse-Ingard
equations are a linear superposition of two Helmholtz-type equations. To get
the actual change of variables that decouples the PDE system, we solve for
\(t\in \mathbb{C}\) such that the sum of the first equation and \(t\) times the
second equation in \eqref{eqn:morse-ingard-pde} reduces to a scalar
Helmholtz-type PDE
\begin{equation}
a_1(t) \nabla^2 T + a_2(t) \nabla^2 P + a_3(t) T + a_4(t) P = a_5(t) S,
\label{eqn:decoupling-equation}
\end{equation}
where \(a_1=\Omega\), \(a_2=(1-i\gamma\Lambda)t\),
\(a_3=i - \gamma\left( 1- \frac{\Lambda}{\Omega}\right) t\),
\(a_4=-i\frac{\gamma-1}{\gamma} +
\left[
 \gamma\left( 1 - \frac{\Lambda}{\Omega}\right) + \frac{\Lambda}{\Omega}
\right]t\),
\(a_5=1 - i \gamma \frac{\Lambda}{\Omega} t\) are all linear functions of \(t\). The
condition under which \eqref{eqn:decoupling-equation} becomes a scalar
Helmholtz-type PDE is \(a_1a_4 = a_2a_3\), which is a quadratic equation of
\(t\) and admits two roots
\begin{equation}
\begin{aligned}
t_\pm
& = \frac{
 (2\Lambda\gamma-\Lambda-\Omega\gamma+i)\Omega
 \mp i\Omega Q }{
 2\gamma(\Lambda-\Omega)(i\Lambda\gamma-1)}.
\end{aligned}
\label{eqn:decoupling-roots}
\end{equation}
Letting \(V_t = \Omega T + t_+(1-i\gamma\Lambda)P\),
\(V_p = \Omega T + t_-(1-i\gamma\Lambda)P\),
we find the decoupled scalar PDEs,
\begin{equation}
\begin{aligned}
\nabla^2V_t + k_t^2 V_t &= a_5(t_+)S, \\
\nabla^2V_p + k_p^2 V_p &= a_5(t_-)S, \\
\end{aligned}
\label{eqn:decoupled-pdes}
\end{equation}
as expected from the thermal and acoustic modes.

\subsection{Sommerfeld radiation condition}
\label{sec:sommerfeld}

Now we consider the boundary conditions for the decoupled equations (\ref{eqn:decoupled-pdes}).
The boundary conditions on $\partial D$ are still decoupled Neumann,
\begin{equation}
\begin{aligned}
    & \left.\frac{\partial V_t}{\partial n}\right\rvert_{\partial D} = \Omega g_T + t_+(1-i\gamma\Lambda)g_P, \\
    & \left.\frac{\partial V_p}{\partial n}\right\rvert_{\partial D} = \Omega g_T + t_-(1-i\gamma\Lambda)g_P.
\end{aligned}
\label{eqn:neumann-decoupled}
\end{equation}
For the far-field conditions, we can either impose boundedness of $V_t, V_p$ at infinity, or apply the classical
far-field conditions for the Helmholtz equation due to Arnold Sommerfeld,
\begin{equation}
\begin{aligned}
\lim_{|x|\rightarrow\infty}|x|^{\frac{d-1}{2}}\left(
 \frac{\partial}{\partial|x|} - ik_t\right)V_t &= 0, \\
\lim_{|x|\rightarrow\infty}|x|^{\frac{d-1}{2}}\left(
 \frac{\partial}{\partial|x|} - ik_p\right)V_p &= 0. \\
\end{aligned}
\label{eqn:sommerfeld-decoupled}
\end{equation}

\subsection{Free-space Green's function}
\label{sec:greens-func}

By letting \(S=\delta(r)\) in \eqref{eqn:morse-ingard-pde}, the free-space
Green's function satisfies the following equations in the weak sense,
\begin{equation}
\left\{
\begin{aligned}
  & \Omega \nabla^2 T + i T - i \frac{\gamma - 1}{\gamma} P = \delta, \\
  & (1 - i \gamma \Lambda) \nabla^2 P +
  \left[ \gamma\left( 1 - \frac{\Lambda}{\Omega}\right) +
  \frac{\Lambda}{\Omega} \right] P -
  \gamma\left( 1- \frac{\Lambda}{\Omega}\right) T = - i \gamma
  \frac{\Lambda}{\Omega} \delta.
\end{aligned}
\right.
\label{eqn:greens-weak-pde}
\end{equation}

We first seek for weak solutions \([T(r), P(r)]^T \in U_d\) that also satisfy
the far-field conditions in \eqref{eqn:morse-ingard-bc-infty}.  In two
dimensions, because when $k$ has positive imaginary part, solutions with
nonzero \(J_0\) components are unbounded at infinity, and represent a wave
traveling from infinity towards \(0\), enforcing the far-field conditions
amounts to restricting the solution to the two-dimensional subspace of $U_2$
spanned by bases involving \(H_0^{(1)}\).  Similarly, we restrict the solution
to the two-dimensional subspace of $U_3$ spanned by bases involving \(h_0^{(1)}
\) in \(3\)D.

We note that, in the weak sense, $\nabla^2_{2D} \left(\frac{1}{2\pi} \ln r\right)
= \nabla^2_{3D} \left(- \frac{1}{4\pi} \frac{1}{r}\right) = \delta(r)$.
\reviewresponse{Reviewer 2, comment 2}
The following asymptotic limits hold when \(r\rightarrow 0\) (\cite{olver_nist_2010}),
\begin{align}
    & \nabla^2_{2D} H_0^{(1)}(kr) \sim\nabla^2 [(2i/\pi) \ln (kr)] = 4i \delta(r), \\
    & \nabla^2_{3D} h_0^{(1)}(kr) \sim\nabla^2 [- i/(kr)] = (4\pi i /k)\delta(r).
\end{align}
When \(r\neq 0\), any linear combination in the fundamental set $U_d$ solves
the system. All we need is to match the leading order terms in the neighborhood
of \(0\). Since we have restricted the solutions to a two-dimensional space,
matching the two coefficients on the right hand side of (\ref{eqn:greens-weak-pde})
uniquely determines the Green's function.

\subsubsection{\(2\)D}
\label{sec:org65336f4}

In two dimensions, we let
\begin{equation}
\begin{aligned}
& G_T = b_1 H_0^{(1)}(k_p r) + b_2 H_0^{(1)}(k_t r), \\
& G_P = b_1 m_p H_0^{(1)}(k_p r) + b_2 m_t H_0^{(1)}(k_t r).
\end{aligned}
\end{equation}
Substituting into \eqref{eqn:greens-weak-pde} and matching the leading order
terms yields
\begin{equation}
\begin{aligned}
& 4i\Omega(b_1 + b_2)\delta = \delta, \\
& 4i(1-i\gamma\Lambda)(b_1 m_p + b_2 m_t)\delta =
  -i\gamma\frac{\Lambda}{\Omega}\delta.
\end{aligned}
\end{equation}
Solving this linear system, we have
\begin{equation}
\begin{aligned}
& b_1 = \frac{\gamma-1}{4\gamma\Omega Q}[im_t + (m_t-1)\gamma\Lambda], \\
& b_2 = - \frac{\gamma-1}{4\gamma\Omega Q}[im_p + (m_p-1)\gamma\Lambda].
\end{aligned}
\end{equation}

\subsubsection{\(3\)D}
\label{sec:org7d34b1e}
Similarly, in three dimensions, we let
\begin{equation}
\begin{aligned}
& G_T = c_1 h_0^{(1)}(k_p r) + c_2 h_0^{(1)}(k_t r), \\
& G_P = c_1 m_p h_0^{(1)}(k_p r) + c_2 m_t h_0^{(1)}(k_t r).
\end{aligned}
\end{equation}
Substituting into \eqref{eqn:greens-weak-pde} and matching the leading order
terms yields
\begin{equation}
\begin{aligned}
& 4\pi i \Omega\left(\frac{c_1}{k_p} + \frac{c_2}{k_t}\right)\delta = \delta \\
& 4\pi i(1-i\gamma\Lambda) \left(\frac{c_1 m_p}{k_p} + \frac{c_2 m_t}{k_t}\right)\delta
  = -i\gamma\frac{\Lambda}{\Omega}\delta.
\end{aligned}
\end{equation}
The solution is
\begin{equation}
\begin{aligned}
& c_1 = \frac{k_p(\gamma-1)}{4\pi\gamma\Omega Q}[im_t + (m_t-1)\gamma\Lambda], \\
& c_2 = - \frac{k_t(\gamma-1)}{4\pi\gamma\Omega Q} [im_p +(m_p - 1)\gamma\Lambda].
\end{aligned}
\end{equation}

\subsection{Green's representation formula}
\label{sec:greens-formula}

Since the solution to the Helmholtz equation satisfies Green's representation formula,
we expect a similar relation holds for solution to the Morse-Ingard equations as well.
Let \(G_p, G_t\) be the Helmholtz kernels of wave number \(k_p, k_t\),
respectively.
Given a density $\rho\in C(\partial D)$, define the {\it single-layer potential operators}
\begin{equation}
\begin{aligned}
    S_t[\rho](x)= \int_{\partial D} G_t(x,y)\rho(y) ds_y, \\
    S_p[\rho](x)= \int_{\partial D} G_p(x,y)\rho(y) ds_y,
\end{aligned}
\end{equation}
and the {\it double-layer potential operators}
\begin{equation}
\begin{aligned}
    D_t[\rho](x)= \int_{\partial D} \partial_{n_y}G_t(x,y)\rho(y) ds_y, \\
    D_p[\rho](x)= \int_{\partial D} \partial_{n_y}G_p(x,y)\rho(y) ds_y.
\end{aligned}
\end{equation}
\reviewresponse{Reviewer 2, comment 3}
If $x\in\partial D$, the integrals above are to be understood in the principal-value sense.

Let \(D\) be a bounded region in \(\mathbb{R}^n\), and \(\partial D\) is piecewise
\(C^2\). Then from Green's representation formula for the Helmholtz
equation \cite[Theorem 12]{evans_partial_2010}, for all \(x\in D\),
\begin{equation}
\begin{aligned}
    D_t[V_t] - S_t[\partial_nV_t] = -V_t,\\
    D_p[V_p] - S_p[\partial_nV_p] = -V_p.
\end{aligned}
\label{eqn:greens-formula-decoupled}
\end{equation}
Denote the left hand sides of (\ref{eqn:greens-formula-decoupled}) as $L_t[V_t]$ and
$L_p[V_p]$, respectively.
Recalling that \(V_{t,p} = \Omega T + t_\pm(1-i\gamma\Lambda)P\), we have
\begin{equation}
\begin{aligned}
    \Omega L_t[T] + t_+(1-i\gamma\Lambda) L_t[P] = -\Omega T - t_+(1-\gamma\Lambda)P, \\
    \Omega L_p[T] + t_-(1-i\gamma\Lambda) L_p[P] = -\Omega T - t_-(1-\gamma\Lambda)P,
\end{aligned}
\label{eqn:greens-formula-coupled}
\end{equation}
inside \(D\). \eqref{eqn:greens-formula-coupled} gives representations of \(P\) and \(T\) in terms of
layer potentials.
Lastly, we note that since all layer potentials above are zero at infinity, the assumption that
$D$ is bounded can be removed and (\ref{eqn:greens-formula-coupled}) still holds.

\section{SKIE formulations}
\label{sec:skie}

\subsection{The decoupled method}

As demonstrated above, the Morse-Ingard equations can be decoupled into two Helmholtz-like PDEs
that have decoupled Neumann boundary conditions on $\partial D$. The most straightforward scheme
is to solve the decoupled PDEs separately and recombine the results. Both
decoupled equations have the form
\reviewresponse{Reviewer 2, comment 2}
\begin{equation}
    \label{eqn:decoupled-skie}
    \left\{
    \begin{aligned}
        & \nabla^2V + k^2 V = 0, \quad \text{in } D^c,\\
        & \partial_n V = h, \quad \text{on } \partial D,
    \end{aligned}
    \right.
\end{equation}
where $V \in \{V_t, V_p\}$, $k \in \{k_t, k_p\}$, $h=\Omega g_T + t_\pm(1-i\gamma\Lambda) g_P$, respectively.

We solve (\ref{eqn:decoupled-skie}) using a single-layer potential representation. For $x\in\partial D$, let
\reviewresponse{Reviewer 2, comment 3}
\begin{equation}
    \begin{aligned}
        & V_t(x) = S_t[\sigma_t](x) = \pvint_{\partial D} \overline{G}(k_t|x-y|) \sigma_t(y) ds_y, \\
        & V_p(x) = S_p[\sigma_p](x) = \pvint_{\partial D} \overline{G}(k_p|x-y|) \sigma_p(y) ds_y, \\
    \end{aligned}
\end{equation}
where $\sigma_{t,p} \in L^2(\partial D)$, $\overline{G}(r) = \frac{1}{2\pi} \ln r$ in 2\(D\)
and $\overline{G}(r) = -\frac{1}{4\pi r}$ in 3\(D\). This leads to the following SKIE for each decoupled component
\cite{colton_integral_2013a},
\begin{equation}
    \label{eqn:skie-components}
    - \frac{1}{2} \sigma (x) + \pvint_{\partial D} \partial_n \overline{G}(k|x-y|) \sigma(y) ds_y  = h(x), \quad \text{on } \partial D.
\end{equation}

After obtaining $V_t, V_p$, we then recover $T, P$ by applying the inverse change of variables
\begin{equation}
    \begin{bmatrix} T \\ P \end{bmatrix} =
        \begin{bmatrix} \Omega & t_+(1-i\gamma\Lambda) \\ \Omega & t_-(1-i\gamma\Lambda) \end{bmatrix}^{-1}
            \begin{bmatrix} V_t \\ V_p \end{bmatrix}.
\end{equation}

\subsection{The coupled method}

Alternatively, we can also use the Green's function for the Morse-Ingard equations to directly formulate
a coupled SKIE. Denote the free-space Green's functions in \(2\)/\(3\)D by
\begin{equation}
G(r) = \begin{bmatrix}G_T(r) \\ G_P(r)\end{bmatrix}.
\end{equation}
We assume that \(k_t^2 - k_p^2 = -\frac{Q}{\Omega(i + \gamma\Lambda)} \neq 0\), which is well-justified on physical grounds (the thermal and acoustic waves have fundamentally different properties).  Then,
\begin{equation}
 G_1(r) = - \frac{\alpha_1\Omega(1 - i\gamma\Lambda)}{4b_2Q}
          (\nabla^2 + k_p^2) G(r),
\end{equation}
where \(\alpha_1 = 1\) in \(2\)D, and \(\alpha_1 = \frac{k_t}{\pi}\) in \(3\)D.
And let
\begin{equation}
G_2(r) = \frac{\alpha_2\Omega(1 - i\gamma\Lambda)}{4b_1Q}
         (\nabla^2 + k_t^2) G(r),
\end{equation}
where \(\alpha_2 = 1\) in \(2\)D, and \(\alpha_2 = \frac{k_p}{\pi}\) in \(3\)D.

Obviously, \(G_1, G_2\) satisfy the homogeneous PDE when \(r\neq 0\) because they
consist of \(G\) and its derivatives. In fact, they are constructed to have very
simple explicit formulae:
\begin{equation}
G_1(r) = - \frac{1}{4i}\begin{bmatrix}
    H_0^{(1)}(k_tr) \\ m_tH_0^{(1)}(k_tr) \\ \end{bmatrix}, \quad
G_2(r) = - \frac{1}{4i} \begin{bmatrix}
    H_0^{(1)}(k_pr) \\ m_pH_0^{(1)}(k_pr) \\ \end{bmatrix} \quad
\text{in \(2\)D},
\end{equation}
and
\begin{equation}
G_1(r) = - \frac{k_t}{4\pi i} \begin{bmatrix}
    h_0^{(1)}(k_tr) \\ m_th_0^{(1)}(k_tr) \\ \end{bmatrix}, \quad
G_2(r) = - \frac{k_p}{4\pi i} \begin{bmatrix}
    h_0^{(1)}(k_pr) \\ m_ph_0^{(1)}(k_pr) \\ \end{bmatrix} \quad
\text{in \(3\)D}.
\end{equation}

To account for the two boundary conditions in \eqref{eqn:morse-ingard-bc-omega},
we need two scalar densities. Let \(\sigma = [\sigma_1, \sigma_2]^T\), \(\sigma_i
\in L^2(\partial D)\). Define the (vector-valued) single-layer potentials of a scalar density
\(\rho\) to be
\begin{equation}
    S_i[\rho](x) = \pvint_{\partial D} G_i(|x-y|) \rho(y) ds_y, \quad x\in\partial D,\text{ and } i=1,2.
\end{equation}
We consider the following solution representation
\begin{equation}
u = S_1[\sigma_1] + S_2[\sigma_2].
\label{eqn:skie-representation}
\end{equation}
We claim that \eqref{eqn:skie-representation} gives an SKIE for the Morse-Ingard
equation. To show that, we first present the jump relations of the layer
potential operators used in the construction.

\begin{proposition}[Jump relations]
\(S_i[\sigma_i]\) is continuous across \(\partial D\), and its normal derivative
satisfies the following jump relations: for \(z\in\partial D\),
\begin{equation}
\frac{\partial}{\partial n_z}S_i(z)_{\pm} := \lim_{x\rightarrow z\pm}
  \frac{\partial}{\partial n_z}S_i[\sigma_i](x) = \frac{\partial}{\partial
  n_z}S_i[\sigma_i](z) \mp \frac{1}{2} \begin{bmatrix} c_i \\ d_i \end{bmatrix}
  \sigma_i(z),
\end{equation}
where
\begin{equation}
\begin{aligned}
& c_1 = c_2 = 1,\quad d_1 = m_t,\quad d_2 = m_p \quad\text{in \(2\)D}, \\
& c_1 = \frac{1}{k_t},\quad c_2 = \frac{1}{k_p}\quad,
  d_1 = \frac{m_t}{k_t},\quad d_2 = \frac{m_p}{k_p} \quad\text{in \(3\)D}.
\end{aligned}
\end{equation}
\label{prop:jump-relations}
\end{proposition}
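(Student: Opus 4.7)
The plan is to reduce the proposition to the classical jump relations for the scalar Helmholtz single-layer potential by exploiting the rank-one structure of each kernel $G_i$. Using the standard identities $\Phi_k(r) = \tfrac{i}{4}H_0^{(1)}(kr)$ in $2$D and $\Phi_k(r) = \tfrac{e^{ikr}}{4\pi r} = -\tfrac{k}{4\pi i}h_0^{(1)}(kr)$ in $3$D for the outgoing scalar Helmholtz fundamental solution $\Phi_k$, the explicit formulae for $G_1$ and $G_2$ factor as a constant two-vector times a scalar Helmholtz kernel: $G_1(r) = \mathbf{v}_1\,\Phi_{k_t}(r)$ and $G_2(r) = \mathbf{v}_2\,\Phi_{k_p}(r)$, with $\mathbf{v}_1, \mathbf{v}_2 \in \mathbb{C}^2$ read off directly from the formulae.

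By linearity, the vector single-layer potential inherits this rank-one form:
\begin{equation*}
S_i[\sigma_i](x) \;=\; \mathbf{v}_i \; \widetilde S_{k_i}[\sigma_i](x), \qquad \widetilde S_k[\sigma](x) := \int_{\partial D} \Phi_k(|x-y|)\,\sigma(y)\,ds_y.
\end{equation*}
The classical theory of the scalar Helmholtz single-layer potential (see, e.g., Colton and Kress) then supplies the two facts needed, valid for $\partial D \in C^2$ and $\sigma \in L^2(\partial D)$: $\widetilde S_k[\sigma]$ extends continuously across $\partial D$, and its normal derivative obeys
\begin{equation*}
\lim_{x\to z^\pm}\frac{\partial}{\partial n_z}\widetilde S_k[\sigma](x) \;=\; \frac{\partial}{\partial n_z}\widetilde S_k[\sigma](z) \;\mp\; \tfrac{1}{2}\sigma(z) \qquad \text{a.e.\ } z\in\partial D.
\end{equation*}
Both facts reduce to their Laplace counterparts because the scalar Helmholtz kernel differs from the Laplace kernel by a function that is at least $C^1$ near the origin, and hence contributes no jump. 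Multiplying each identity by the constant vector $\mathbf{v}_i$ preserves continuity and the jump, producing the claimed relation with coefficient vector $[c_i, d_i]^T = \mathbf{v}_i$.

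What remains is to read off $\mathbf{v}_i$ and verify it matches the $c_i, d_i$ stated in the proposition. In $2$D this is immediate from the prefactor $-\tfrac{1}{4i} = \tfrac{i}{4}$, giving $\mathbf{v}_1 = [1, m_t]^T$ and $\mathbf{v}_2 = [1, m_p]^T$. In $3$D it requires carefully tracking how the $k_t, k_p$ prefactors in the definition of $G_1, G_2$ combine with the $k$-dependent identity between $h_0^{(1)}(kr)$ and $\Phi_k(r)$ to produce the residual $1/k_t$ and $1/k_p$ factors; I expect this routine but fiddly constant-chasing to be the only real obstacle in the argument, the analytic content being a textbook application of classical scalar Helmholtz potential theory.
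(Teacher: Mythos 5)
Your overall strategy coincides with the paper's: the authors' entire proof consists of the remark that each $S_i$ is a constant (vector) multiple of a standard scalar Helmholtz single-layer potential, so that continuity and the normal-derivative jump follow from the classical theory in Colton--Kress. Your rank-one factorization $G_i(r) = \mathbf{v}_i\,\Phi_{k_i}(r)$, the reduction to the Laplace kernel via a smoother remainder, and the observation that multiplying the scalar jump identity by a constant vector preserves it, are a correct and somewhat more explicit rendering of that same argument. Your $2$D constants also check out: $-\tfrac{1}{4i}=\tfrac{i}{4}$, so $\mathbf{v}_1=[1,m_t]^T$ and $\mathbf{v}_2=[1,m_p]^T$, matching the stated $c_i,d_i$.

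The gap is precisely the step you deferred, and it does not come out the way you ``expect.'' By your own identity $\Phi_k(r) = -\tfrac{k}{4\pi i}h_0^{(1)}(kr)$, the prefactor $-\tfrac{k_t}{4\pi i}$ in the paper's explicit $3$D formula for $G_1$ is \emph{exactly} the normalizing constant of $\Phi_{k_t}$: the first component of $G_1$ equals $-\tfrac{k_t}{4\pi i}\cdot\tfrac{-i}{k_t r}e^{ik_t r} = \tfrac{e^{ik_t r}}{4\pi r} = \Phi_{k_t}(r)$, with no residual factor. Hence $\mathbf{v}_1 = [1,m_t]^T$ and $\mathbf{v}_2=[1,m_p]^T$ in $3$D as well, i.e.\ the jump coefficients obtained from the explicit kernels are $c_1=c_2=1$, $d_1=m_t$, $d_2=m_p$ --- not the $1/k_t$, $m_t/k_t$, etc.\ asserted in the proposition. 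The stated $3$D table instead corresponds to kernels without the $k_t,k_p$ prefactors, since $-\tfrac{1}{4\pi i}h_0^{(1)}(kr) = \tfrac{1}{k}\Phi_k(r)$. So there is an inconsistency of a factor of $k_{t,p}$ between the proposition's $3$D coefficients and the paper's explicit $3$D kernels, and the one computation you waved off as ``routine but fiddly'' is exactly the place where the verification fails. Since everything in the proposition beyond textbook scalar potential theory is the table of constants, the proof is incomplete without carrying out (and in this case, flagging the discrepancy revealed by) that check. A secondary, more minor point: the classical jump relations you invoke are stated in Colton--Kress for continuous or H\"older densities; for $\sigma\in L^2(\partial D)$ they hold only in the $L^2$ sense, so the ``a.e.'' formulation deserves a citation to the appropriate $L^2$ version rather than to the continuous one.
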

Since in our construction, the layer potential operators are standard
single-layer potentials of the Helmholtz equation (up to constant
multiplications), the proof of \eqref{prop:jump-relations} follows the same exact
steps as in \cite[Chapter~2.4-2.5]{colton_integral_2013a} for the Helmholtz
equation.

Substituting the representation of \(u\) into \eqref{eqn:morse-ingard-bc-omega} yields
the following boundary integral equation
\begin{equation}
\left(- \frac{1}{2} \begin{bmatrix} c_1 & c_2 \\ d_1 & d_2
\end{bmatrix} + \begin{bmatrix} \partial_nS_{1T} & \partial_nS_{2T}
\\ \partial_nS_{1P} & \partial_nS_{2P} \end{bmatrix} \right) \begin{bmatrix}
\sigma_1\\\sigma_2 \end{bmatrix}(z)
= \begin{bmatrix} \partial_n T \\ \partial_n P \end{bmatrix}(z),
\quad \forall z \in \partial D.
\label{eqn:skie}
\end{equation}

Note that under the current assumption of \(k_t^2 \neq k_p^2\), we have
\begin{equation}
\det\left(\begin{bmatrix} c_1 & c_2 \\ d_1 & d_2 \end{bmatrix}\right)
  = c_1d_2 - c_2d_1 \neq 0.
\end{equation}

To supply smoothness for our proofs of numerical accuracy, we note
a compactness result for the operators in \eqref{eqn:skie}.
\begin{proposition}
  \label{thm:compact}
  If $\partial D\in C^{2}$, for $0<\alpha < 1$,
  the operator
  \[
  L:
  \begin{bmatrix}
  \sigma_1 \\
  \sigma_2
  \end{bmatrix}
  \mapsto
  \begin{bmatrix}
  \partial_nS_{1T} & \partial_nS_{2T}\\
  \partial_nS_{1P} & \partial_nS_{2P}
  \end{bmatrix}
  \begin{bmatrix}
  \sigma_1 \\
  \sigma_2
  \end{bmatrix}
  \]
    is a compact operator from $C^{0,\alpha}\times C^{0,\alpha}$ into itself.
\end{proposition}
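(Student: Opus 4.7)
The plan is to recognize that $L$ is simply a $2\times 2$ matrix whose entries are constant multiples of standard Helmholtz adjoint double-layer operators on $\partial D$, and then to invoke the classical compactness theory for those operators on Hölder spaces.

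Concretely, I would first read off from the explicit formulas for $G_1, G_2$ given above that each scalar component is proportional to a single Helmholtz fundamental solution. In 2D, the $T$-component of $G_1$ is $-\tfrac{1}{4i}H_0^{(1)}(k_t |x-y|)$ and the $P$-component is $m_t$ times that; analogously $G_2$ uses $H_0^{(1)}(k_p |x-y|)$ together with the factor $m_p$. The same proportionality holds in 3D with $h_0^{(1)}$ and the extra prefactors $k_t/\pi, k_p/\pi$. Taking normal derivatives with respect to $x$, each entry of $L$ therefore factors as a scalar constant times the standard Helmholtz operator
\begin{equation*}
K'_k[\rho](x) = \pvint_{\partial D} \partial_{n_x} \overline{G}(k|x-y|) \rho(y) \, ds_y,
\end{equation*}
with $k = k_t$ in the first column and $k = k_p$ in the second.

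Next, I would invoke the classical result (e.g.\ Colton--Kress \cite{colton_integral_2013a}) that $K'_k$ is compact from $C^{0,\alpha}(\partial D)$ into itself whenever $\partial D \in C^2$. The mechanism behind this is that the bound $|n_x\cdot(x-y)|\le C|x-y|^2$ available on $C^2$ surfaces tames the normal-derivative kernel singularity to a weakly singular one amenable to Giraud-type estimates, both in two and in three dimensions. Compactness of $L$ on $C^{0,\alpha}\times C^{0,\alpha}$ then follows from the fact that compactness is stable under scalar multiplication and under $2\times 2$ matrix assembly on a product Banach space: $L$ is a finite linear combination of compositions of the compact scalar maps $K'_{k_t}, K'_{k_p}$ with the bounded coordinate projections and injections of the product space.

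I do not anticipate any substantial obstacle; the entire content of the proof is bookkeeping the scalar prefactors from the formulas for $G_1, G_2$ and then quoting the well-established Helmholtz boundary-integral theory. The only point requiring mild care is to verify that the cited compactness result covers both $d=2$ and $d=3$ in a unified way, which it does since the kernel asymptotics and the $C^2$-surface bound are treated in both settings in \cite{colton_integral_2013a}.
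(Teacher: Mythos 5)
Your proposal is correct and follows essentially the same route as the paper: each entry of $L$ is identified as a constant multiple of a Helmholtz adjoint double-layer operator (the normal derivative of a rescaled single-layer potential), whose compactness on $C^{0,\alpha}(\partial D)$ for $C^2$ boundaries is quoted from the classical theory, and compactness of the $2\times 2$ matrix operator follows entrywise. The extra bookkeeping you supply (reading off the scalar prefactors from $G_1,G_2$ and noting stability of compactness under matrix assembly) is just a more explicit version of what the paper leaves implicit.
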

\begin{proof}
    Since $L$ is a $2\times 2$ operator matrix, we only need to show that each entry is compact.
    Being adjoint operators of the double layer potentials (i.e., derivatives of rescaled Helmholtz single layer potentials),
    \(\partial_n S_{i*}\) are compact operators from Hölder space $C^{0,\alpha}(\partial D)$
    into itself ({\cite[Theorem 7.5]{kress_linear_2012}}).
Therefore, \eqref{eqn:skie} is an SKIE.
\end{proof}

Regarding analytical properties of the operator, the most relevant results we
found in the literature are for layer potentials with Laplace kernels,
which admit the following regularity estimates:
\begin{proposition}[Layer potential regularity {\cite[Prop.~4.2]{mazya_higher_2005}}]
    \label{thm:mazya}
    Let $d\geq 2$, $p\in(1,\infty)$, and $p(\ell-1)>d-1$, where $\ell$ is a
    noninteger, $\ell > 1$. And let $\alpha = 1-\{\ell\}-1/p$. Suppose that
    $\partial D$ is connected and satisfies the Sobolev graph property
    $\partial D \in W^\ell_p$, that is, for every point $O\in\partial D$ there
    exists a neighbourhood $U$ and $f \in W^\ell_p(\mathbb{R}^{d-1})$ such that
    \[
        U\cap\Omega = U\cap\{(x,y) | x\in\mathbb{R}^{d-1}, y>f(x)\}.
    \]
    Then
    \[
        \begin{aligned}
            & \|(S\rho)_+\|_{W^{\lfloor\ell\rfloor + 1, \alpha}_p(D)} \leq c    \|\rho\|_{W^{\ell-1}_p(\partial D)}, \\
            & \|(S\rho)_-\|_{W^{\lfloor\ell\rfloor + 1, \alpha}_p(D)} \leq c(D) \|\rho\|_{W^{\ell-1}_p(\partial D)}, \\
        \end{aligned}
    \]
    where the weighted Sobolev norm is defined as
    \[
        \|u\|_{W^{m, \alpha}_p(D)} =  \left( \int_D (\operatorname{dist}(z,\partial D))^{p\alpha} |\nabla_m u(z)|^p dz \right)^{1/p} + \|u\|_{L_p(D)},
    \]
    and $(S\rho)_\pm$ are the limits of the Laplace single layer potentials
    from the outside and the inside, respectively.
    In particular, for the adjoint operator,
    \[
        \|D^\star\rho\|_{W^{\ell - 1}_p(\partial D)} \leq c \|\rho\|_{W^{\ell-1}_p(\partial D)}.
    \]
\end{proposition}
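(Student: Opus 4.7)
The plan is to reduce the claim, via a partition of unity and local charts, to a mapping property for a model singular integral operator on a graph, then to absorb the limited boundary regularity into the weight near $\partial D$. The assumption $p(\ell-1)>d-1$ enters only to force $f\in W^\ell_p(\mathbb{R}^{d-1})$ into a Hölder class so that the model kernels remain Calderón--Zygmund.

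First I would localize: choose finitely many neighbourhoods $U_j$ covering $\partial D$ in which $\partial D$ is the graph of some $f_j\in W^\ell_p(\mathbb{R}^{d-1})$, and a subordinate partition of unity. Pulling back $S\rho$ onto the flat model reduces the claim to a bound on
\[
  (T_f \rho)(x',x_d) = \int_{\mathbb{R}^{d-1}} E\bigl(x'-y',\,x_d-f(y')\bigr)\,\rho(y')\,J_f(y')\,dy',
\]
where $E$ is the Laplace fundamental solution and $J_f$ the surface Jacobian. Because $p(\ell-1)>d-1$, Sobolev embedding gives $f\in C^{0,\ell-1-(d-1)/p}$, so $T_f$ is a standard Calderón--Zygmund operator with Lipschitz graph and every derivative of $f$ of order $\le\lfloor\ell\rfloor$ lies in a space with known multiplier properties.

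Next I would differentiate under the integral. Moving $\lfloor\ell\rfloor+1$ $x$-derivatives onto $E$ produces a kernel of homogeneity $-(d-1+\lfloor\ell\rfloor)$ multiplied by polynomials in $\nabla^{\le\lfloor\ell\rfloor}f$; these factors are controlled in $L^p$ by hypothesis. The weighted estimate
\[
  \int_D \operatorname{dist}(z,\partial D)^{p\alpha}\,\bigl|\nabla_{\lfloor\ell\rfloor+1}(S\rho)(z)\bigr|^p\,dz \;\lesssim\; \|\rho\|_{W^{\ell-1}_p(\partial D)}^p
\]
then follows from a Hardy/trace inequality: integrating the weight across a tubular neighbourhood of $\partial D$ gains $\alpha+1/p$ orders of smoothness on the boundary side, so the effective order of the operator on $\partial D$ is $\lfloor\ell\rfloor+1-\alpha-1/p=\ell$; one derivative is spent passing from the interior gradient to the boundary datum, leaving exactly $\ell-1$ derivatives on $\rho$. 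The two-sided version $(S\rho)_\pm$ is handled identically; the constant on the exterior side picks up dependence on $D$ through the tubular neighbourhood geometry. For the adjoint statement, I would either dualize using the characterization of $W^{\ell-1}_p(\partial D)$ as a trace space for $W^{\lfloor\ell\rfloor+1,\alpha}_p(D)$, or note directly that $D^\star\rho$ is the trace of $\partial_n S\rho$ on $\partial D$ and invoke the interior bound with one extra derivative.

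The principal obstacle is the precise calibration $\alpha=1-\{\ell\}-1/p$. Away from this value the Hardy inequality either fails to be sharp (too much weight) or the trace theorem does not land in $W^{\ell-1}_p(\partial D)$ (too little). Verifying that this single exponent simultaneously matches the Sobolev trace, the Hardy integration in the normal direction, and the differentiability class of the graph function $f$ is the technical heart of the argument, and is the reason we cite Mazya's Proposition 4.2 rather than reproduce the full computation here.
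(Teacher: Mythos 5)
The paper does not prove this proposition at all: it is imported verbatim from Maz'ya--Shaposhnikova (the cited Prop.~4.2) and used as a black box, so there is no in-paper argument to compare yours against. Judged on its own terms, your outline is a sensible reconstruction of the standard strategy (localize, flatten to a graph, Calder\'on--Zygmund estimates for the model operator, then a weighted Hardy/trace argument in the normal direction), and your exponent arithmetic is consistent: $\lfloor\ell\rfloor+1-\alpha-1/p=\ell$ with $\alpha=1-\{\ell\}-1/p$. But as written it is a plausibility argument rather than a proof. The decisive step --- that the weighted interior estimate with \emph{exactly} this $\alpha$ closes against the $W^{\ell-1}_p(\partial D)$ norm of the density, given only $\partial D\in W^\ell_p$ --- is the one you explicitly defer back to the citation, which is the same move the paper makes, just with more scaffolding around it.

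Two smaller points worth fixing if you intend this as more than a roadmap. First, the hypothesis $p(\ell-1)>d-1$ gives $\nabla f\in W^{\ell-1}_p\hookrightarrow C^{0,\gamma}$ with $\gamma=\ell-1-(d-1)/p$, i.e.\ $f\in C^{1,\gamma}$, not merely $f\in C^{0,\gamma}$ as you wrote; it is the H\"older continuity of the \emph{gradient} that you actually need to keep the flattened kernel within Calder\'on--Zygmund reach. Second, after moving $\lfloor\ell\rfloor+1$ derivatives onto $E$ the resulting kernels have homogeneity below $-(d-1)$ and are no longer bounded on $L^p$ of the boundary by standard CZ theory; the weight $\operatorname{dist}(z,\partial D)^{p\alpha}$ is not a bookkeeping device layered on top of a CZ bound but the only thing that makes those integrals finite, and controlling the commutators with the low-regularity factors $\nabla^{\le\lfloor\ell\rfloor}f$ (which are only in $W^{\ell-\lfloor\ell\rfloor}_p$, not bounded multipliers in the naive sense) is where Maz'ya's multiplier machinery enters. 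Your sketch names the right destination but does not traverse that part of the route.
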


\section{Numerical implementation and error analysis}
\label{sec:method}

For the coupled method, the left hand side operator of the SKIE is a compact perturbation
to a block-constant operator. This structure inspires us to try to use the
inverse of the constant part in \eqref{eqn:skie} as a left block preconditioner
\begin{equation}
P = \left(-\frac{1}{2} \begin{bmatrix} c_1 & c_2 \\ d_1 & d_2
 \end{bmatrix}\right)^{-1}.
\end{equation}
The idea is to make the preconditioned operator ``almost'' block-diagonal in the sense
\reviewresponse{Reviewer 1, comment 1}
that its spectrum only has one cluster point in the complex plane, as will be
shown in the numerical tests.
Doing so empirically reduces the number of GMRES iterations needed by up-to one half.

As will be shown in the next section, for the realistic parameters used in our
tests, the thermal modes decay rapidly relative to the acoustic modes, leading
to two widely separated spatial scales. This may lead to numerical difficulties
for the coupled method.  Due to the fast decay of thermal modes, however, if we
are only interested in the solution in the bulk region away from the boundary,
we can solve only for $V_p$ in (\ref{eqn:decoupled-skie}) using a single-layer
potential representation
\reviewresponse{Reviewer 2, comment 3}
\begin{equation}
       V_p(x) = S_p[\sigma_p](x) = \pvint_{\partial D} \overline{G}(k_p|x-y|) \sigma_p(y) ds_y,
       \quad x\in\partial D,
\end{equation}
and then obtain $T$ and $P$ by fixing thermal mode to be zero and applying the
inverse change of variables
\begin{equation}
    \begin{bmatrix} T \\ P \end{bmatrix} =
        \begin{bmatrix} \Omega & t_+(1-i\gamma\Lambda) \\ \Omega & t_-(1-i\gamma\Lambda) \end{bmatrix}^{-1}
            \begin{bmatrix} 0 \\ V_p \end{bmatrix}.
    \label{eqn:projection-method}
\end{equation}
Since doing so equates to projecting the problem to include only the acoustic modes,
we will refer to this modified version of the decoupled method as {\it the projection method}.

The recasting of the Morse-Ingard system \eqref{eqn:morse-ingard-pde}
in second-kind integral equations presented thus far is independent of
the specific means by which it is discretized.

We discretize the SKIEs using the Nyström's approach, that is, we discretize
\eqref{eqn:skie} by replacing the (singular) integrals in the layer potential
operators by appropriate quadrature schemes. To evaluate the singular
quadratures efficiently, we use the GIGAQBX algorithm, a
quadrature-by-expansion method with FMM acceleration
\cite{klockner_quadrature_2013,rachh_fast_2017,wala_fast_2018}.
With efficient algorithms for the left hand side operators, we solve
the discretized linear systems using GMRES.

Loosely, for Nyström discretizations of second-kind systems, Anselone's theorem
(e.g.~\cite[Theorem 10.12]{kress_linear_2012}) states that the error in
the computed density is bounded by the sum of the discretization error
in the right-hand side and the quadrature error in the evaluation of
the layer potential operators. Given our use of QBX discretization,
our quadrature error behaves as described by the following theorem:

\begin{theorem}[{QBX error estimate \citep[Thm.~1]{klockner_quadrature_2013}}]
    \label{thm:qbx-accuracy}
    Suppose that $\partial D$ is a smooth, bounded curve embedded in $\mathbb{R}^2$, that
    $B_r(c)$ is the ball of radius $r$ about $c$, and that $\overline{B_r(c)}\cap\partial D = \{x\}$.
    Let $\partial D$ be divided into $M$ panels, each of length $h$ and let $p,q$ be non-negative
    integers that define the QBX order and the number of nodes of the smooth
    Gaussian quadrature used to compute the QBX coefficients, respectively.
    For $0< \beta < 1$, there are constants $C_{p,\beta}$ and
    $\tilde{C}_{p,q,\beta}$ so that if $\phi$ lies in the Hölder space
    $C^{p,\beta}(\partial D)\cap C^{q,\beta}(\partial D)$, then the components of QBX discretization error admit the following bounds
    \begin{align}
        \label{eq:qbx-err-trunc}
        \|E_{\text{QBX,trunc}}(\phi)\|_\infty &\leq
        C_{\partial D,p,\beta} r^{p+1}\|\phi\|_{C^{p,\beta}(\partial D)},\\
        \label{eq:qbx-err-quad}
        \|E_{\text{QBX,quad}}(\phi)\|_\infty &\leq
        \tilde{C}_{\partial D,p,q,\beta} \left(\frac{h}{4r}\right)^{q}\|\phi\|_{C^{q,\beta}(\partial D)}.
    \end{align}
\end{theorem}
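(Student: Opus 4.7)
The plan is to split the QBX error into two pieces, exactly matching the two bounds in the statement, and then to estimate each piece separately. Write the layer potential as an integral against the Helmholtz (or Laplace) kernel $G$, and introduce the local (Graf/addition–theorem) expansion of $G(\cdot,y)$ about the expansion center $c$. The QBX approximation at $x$ is obtained by (i) truncating this expansion after $p+1$ terms, and (ii) approximating each coefficient of the truncated expansion by applying an $M$-panel, $q$-node composite Gauss–Legendre rule on $\partial D$. Triangle inequality then gives
\begin{equation*}
E_{\text{QBX}}(\phi) = E_{\text{QBX,trunc}}(\phi) + E_{\text{QBX,quad}}(\phi),
\end{equation*}
where the first term uses exact coefficients and the second measures the quadrature error in the coefficients.

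For the truncation estimate \eqref{eq:qbx-err-trunc}, I would use that the local expansion of $G(\cdot,y)$ about $c$ converges absolutely for $|x-c|<|y-c|$, with tail bounded by a geometric series in $|x-c|/|y-c|$. Since $\overline{B_r(c)}\cap \partial D=\{x\}$, one has $|y-c|\ge r=|x-c|$ only with equality at $y=x$, so the series converges pointwise, and an integration against $\phi$ produces a tail of order $r^{p+1}$ multiplied by a constant depending on $\partial D$ through its arclength and geometric configuration relative to $c$. The Hölder regularity $\phi\in C^{p,\beta}(\partial D)$ is needed to control the boundary behavior of the tail as $y\to x$; this is where the $C_{\partial D,p,\beta}\|\phi\|_{C^{p,\beta}(\partial D)}$ factor arises.

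For the quadrature estimate \eqref{eq:qbx-err-quad}, I would exploit that the $n$th coefficient of the local expansion is an integral on $\partial D$ of a smooth function: the integrand is $\phi(y)$ times derivatives of $G$ evaluated at $c$, which is analytic in $y$ on any panel because $c$ is off the surface at distance $r$. Classical Gauss–Legendre quadrature error analysis via contour deformation then gives, on a panel of length $h$, an error of the form $C(h/(4\rho))^q$, where $\rho$ is the distance from the panel to the nearest singularity of the integrand in the complex plane; here $\rho\gtrsim r$ because the nearest obstruction is the expansion center at distance $r$ from the curve. Summing over $n\le p$ and over panels, one obtains the stated $(h/(4r))^q$ bound with a prefactor that absorbs $p$ and the geometry of $\partial D$, and that requires $\phi\in C^{q,\beta}(\partial D)$ to control the tails of the Gauss rule near panel endpoints.

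The main obstacle is the second step: making the $(h/(4r))^q$ factor sharp. One must carefully track the effective radius of analyticity of the integrand in the complex extension of the panel parametrization — this is where the factor of $4$ (from the standard Gauss–Legendre Bernstein-ellipse estimate, combined with the mapping from $[-1,1]$ to a panel of length $h$) enters — and verify that the distance from the panel to the singular expansion center is indeed bounded below by a constant multiple of $r$ uniformly in the panel index, which uses that $\partial D$ is smooth and that $c$ is placed a distance $r$ off the surface along the normal at $x$. The truncation piece, by contrast, is a fairly direct application of classical multipole/local expansion bounds.
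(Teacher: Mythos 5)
First, note that the paper does not prove this theorem at all: it is quoted verbatim from the cited reference \citep[Thm.~1]{klockner_quadrature_2013}, so there is no in-paper proof to compare against. Your sketch does follow the same high-level decomposition used in that reference (triangle inequality splitting the error into an expansion-truncation part with exact coefficients and a coefficient-quadrature part), and your treatment of the quadrature term --- analyticity of the coefficient integrands with nearest singularity at the off-surface center, distance $\sim r$, feeding a Bernstein-ellipse/derivative-based Gauss--Legendre estimate --- is a reasonable route to the $(h/(4r))^{q}$ factor, modulo the minor point that $\phi\in C^{q,\beta}$ is needed because derivatives of the integrand land on $\phi$ via the product rule, not because of ``tails near panel endpoints.''

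The genuine gap is in the truncation estimate, which you describe as ``a fairly direct application of classical multipole/local expansion bounds''; it is in fact the delicate part. The addition-theorem tail for $G(\cdot,y)$ expanded about $c$ is controlled by a geometric series in $|x-c|/|y-c|$, and under the hypothesis $\overline{B_r(c)}\cap\partial D=\{x\}$ this ratio equals $1$ at $y=x$ and tends to $1$ as $y\to x$ along $\partial D$. The geometric series therefore does not converge uniformly, and integrating the tail against $\phi$ over $\partial D$ does not directly yield an $O(r^{p+1})$ bound --- the near-diagonal contribution is exactly where the naive argument breaks down. Saying that the H\"older regularity of $\phi$ ``controls the boundary behavior of the tail as $y\to x$'' names the difficulty without resolving it. The actual mechanism in the cited proof is different: one first uses the mapping properties of the single-layer potential to conclude that $S\phi$ extends to a function in $C^{p,\beta}$ up to $\partial B_r(c)$ (with norm controlled by $\|\phi\|_{C^{p,\beta}(\partial D)}$), and then applies a separate lemma stating that a Helmholtz solution in $B_r(c)$ with that boundary regularity is approximated on the \emph{closed} ball by the degree-$p$ truncation of its Fourier--Bessel local expansion with error $C\,r^{p+1}$ times that norm. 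Without this two-step argument (regularity of the potential up to the boundary, plus a coefficient-decay/Taylor-remainder lemma valid at the boundary circle), the $r^{p+1}$ bound at the on-surface target $x$ is not established.
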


The use of layer potentials (such as $\partial_n S$) with more derivatives as
well as the use of FMM approximation incurs additional (controlled) error. See, for
example, \cite{epstein_convergence_2013} as well as the FMM error
analysis in \cite{wala_fast_2019} for details.

\section{Numerical results}
\label{sec:results}

In this section, we present several numerical examples that demonstrate the
accuracy and efficiency of our method.
The dimensionless parameter values used for all our tests are listed in Table
\ref{tab:param}, which are derived from the physical parameters from
\cite{kaderli_analytic_2017}. This set of parameters assumes that the fluid is
nitrogen gas at a temperature of $300$ K and a pressure of $1$ bar. It is
noteworthy that the parameter values we use closely correspond to the modeling
of laboratory experiments of trace gas sensors \cite{petra_modeling_2011}.

\begin{table}[h!]
    \centering
    \begin{tabular}{c|c}
        Symbol & Value \\
        \hline
        $\Omega$  & $3.664152973215096\times 10^{-5}$ \\
        $\gamma$  & $1.399999976158142$ \\
        $\Lambda$ & $5.370572762330994\times 10^{-5}$ \\
    \end{tabular}
    \caption{Dimensionless parameters.}
    \label{tab:param}
\end{table}

Using these dimensionless parameters to solve (\ref{eqn:quadratic-k}) yields the two wavenumbers
\begin{equation*}
    \begin{aligned}
        k_t &\approx 116.81+116.81i,\\
        k_p &\approx  1+3.42\times 10^{-5}i.
    \end{aligned}
\end{equation*}
We note that $k_t$ is the wavenumber corresponding to the fast-decaying thermal
modes, which gives rise to a boundary layer with thickness at the scale of
$\frac{1}{|\operatorname{Im}k_t|} \approx 0.01$.  On the other hand, $k_p$
corresponds to the slowly-decaying acoustic modes that have characteristic
length at the scale of $1$.

In all our tests, the GMRES tolerance is set to $10^{-14}$. We denote the computed
temperature field $T_h$ and the pressure field $P_h$, respectively, and let
\begin{equation}
    E_T(x) = \frac{|T_h(x) - T(x)|}{|T(x)|}, \quad
    E_P(x) = \frac{|P_h(x) - P(x)|}{|P(x)|}.
\end{equation}

\subsection{The coupled method and preconditioning}
\label{sec:res-coupled}

We perform convergence tests on a two-dimensional circular geometry
with radius $3.5$ centered at $(5.25, 5.25)$.
The manufactured solution is obtained by evaluating a point potential
using the free-space Green's function in Section \ref{sec:greens-func}
from sources inside the circle. We use piecewise polynomials up to order $p$ to
discretize over each panel.
For each test, we measure the error in the
infinity norm on the boundary discretization points as well as some volume
points in $[0, 10.5]^2\mathbin{\backslash} D$.

We first perform \(p\)-convergence tests by splitting the circle into $100$
equal-sized panels and increase $p$.  For this test, we set the values for the
FMM order and the QBX order to also be $p$ for simplicity. Note that both the
FMM order and the QBX order are linear functions of the number of accurate
digits, and setting them to be equal is by no means the optimal choice, but
does not affect the asymptotic convergence behavior.  Results for the coupled
method are shown in Figure \ref{fig:conv1}. In the first place, we see similar
convergence curves for different ways to measure the error norm, while the
error data measured over the boundary is less noisy. Therefore, from now on,
we will only measure the error over the boundary.  We also see similar
convergence curves when we solve with or without the block preconditioner.

\begin{figure}[ht]
  \centering
  \includegraphics[width=0.45\textwidth]{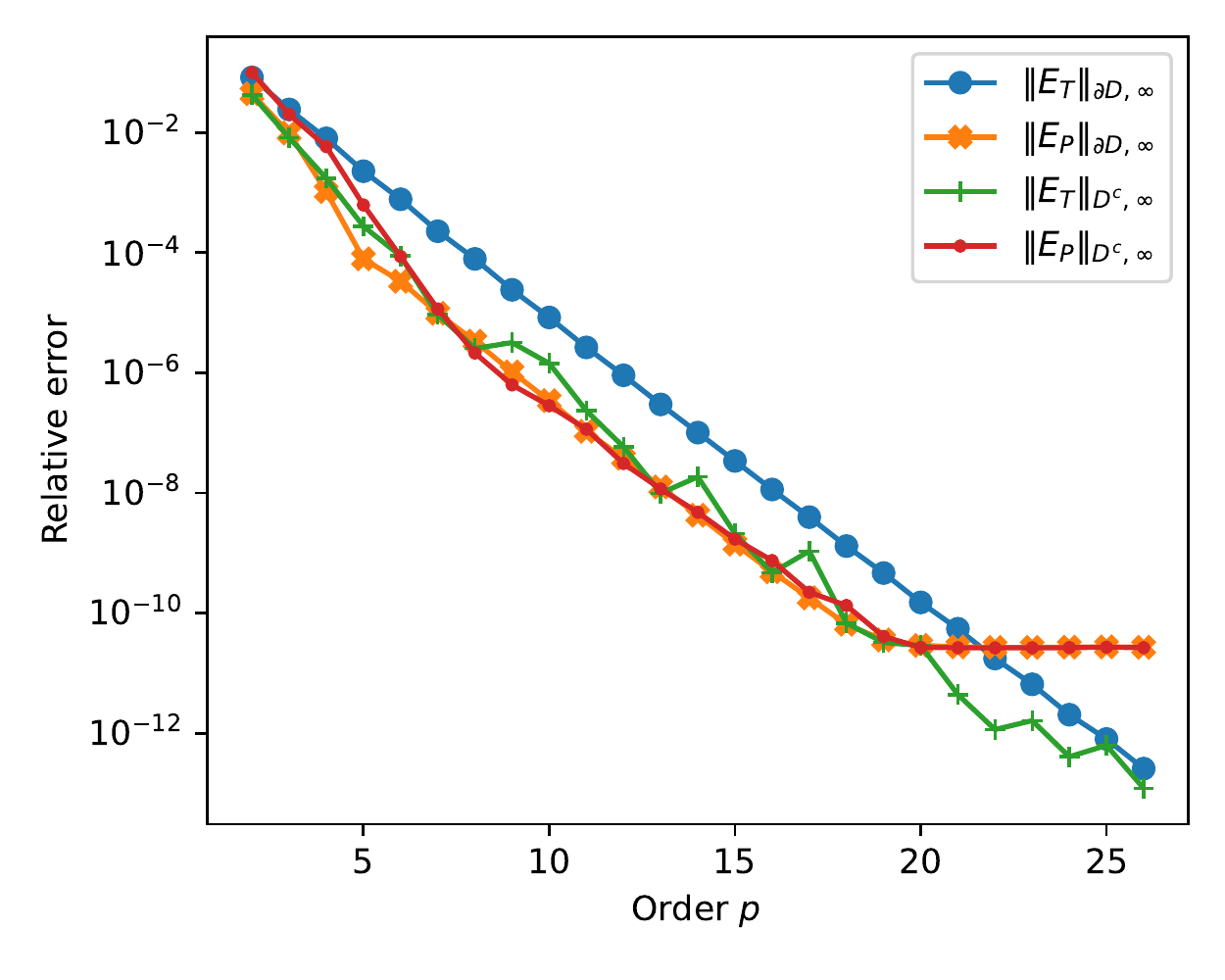}
  \includegraphics[width=0.45\textwidth]{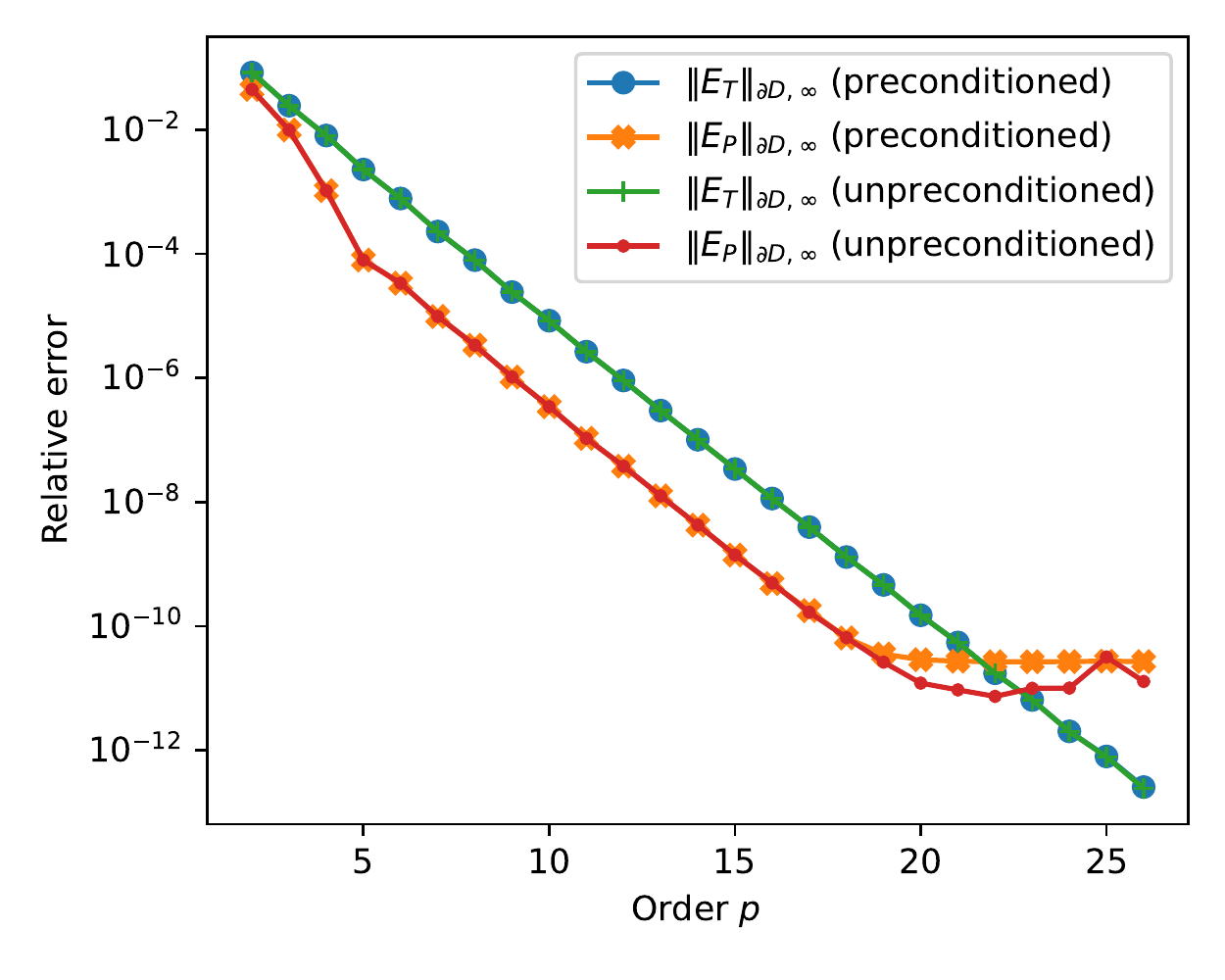}
    \caption{\(p\)-convergence using the coupled method. (Left: comparing the error
    norms measured over the boundary versus the volume. Right: comparing the errors
    with and without using the block preconditioner.)}
  \label{fig:conv1}
\end{figure}

We also perform \(h\)-convergence tests by fixing $p$ and shrinking panel sizes. For this test,
we set the QBX order to be $p+4$ and fix FMM order to be $15$. The goal for such choices is to
make the QBX truncation error and FMM error much smaller than the discretization error; therefore,
we should have convergence order of $p+1$ as $h\rightarrow 0$. The results are
shown in Figure \ref{fig:convh}, where the data confirms our expected convergence order.

\begin{figure}[ht]
  \centering
  \includegraphics[width=0.45\textwidth]{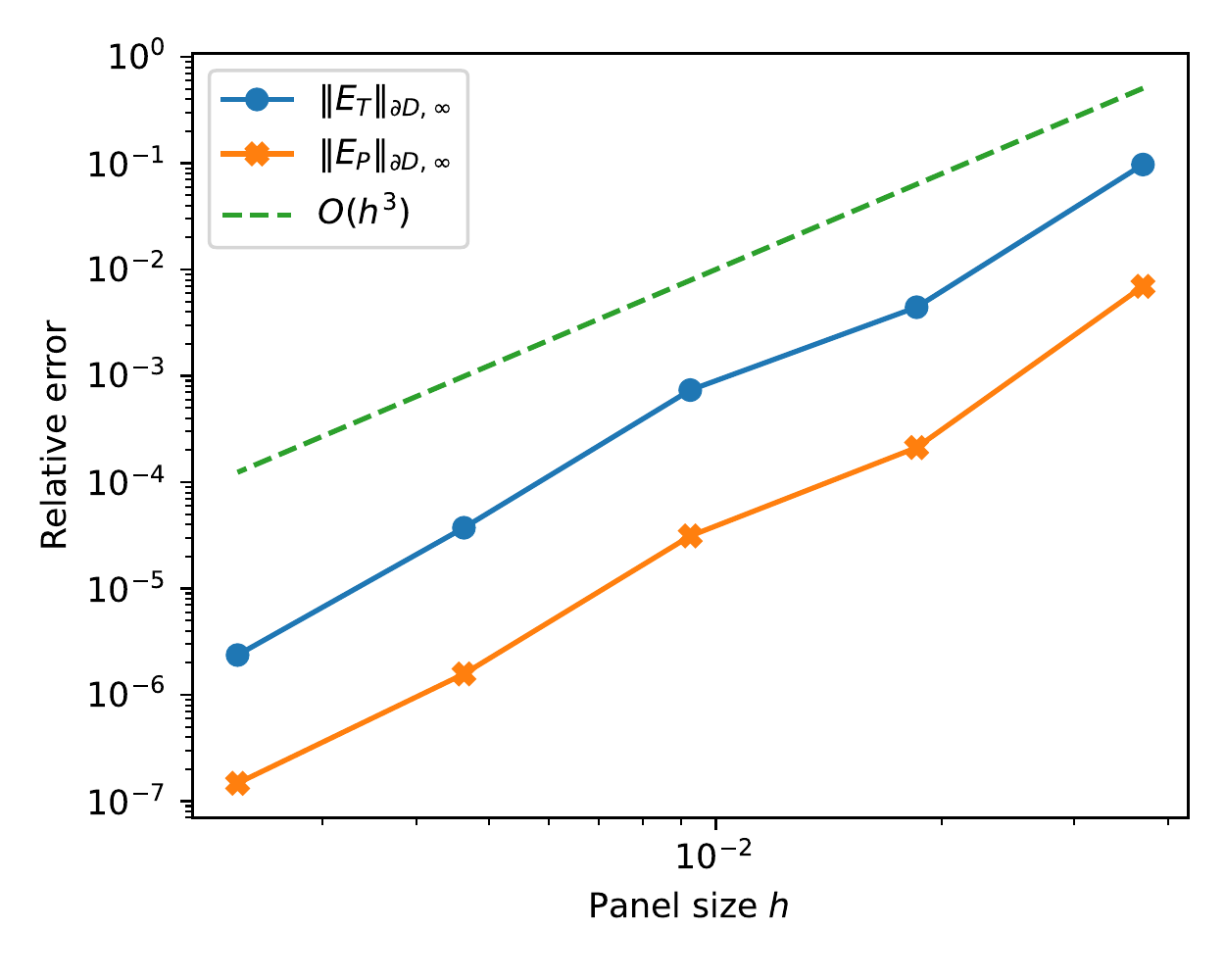}
  \includegraphics[width=0.45\textwidth]{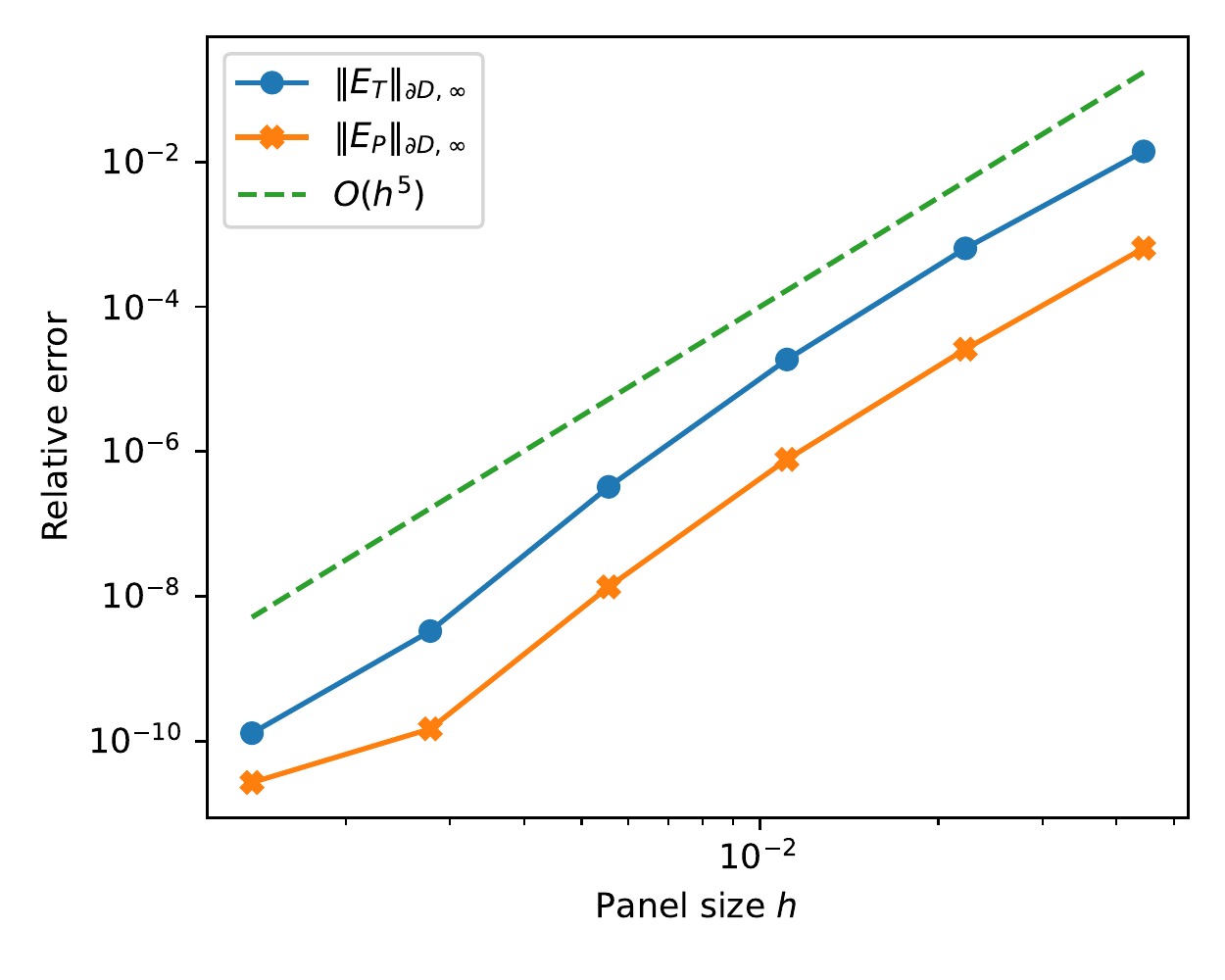}
    \caption{\(h\)-convergence using the coupled method. (Left: $p=2$. Right: $p=4$.)}
  \label{fig:convh}
\end{figure}

\subsection{The coupled method vs the decoupled method}

Using the same problem setup as in Section \ref{sec:res-coupled}, we compare the coupled
\reviewresponse{Reviewer 1, comment 2}
method and the decoupled method. We present the results in Figure \ref{fig:conv2}. For low order
cases, the decoupled method yields accuracy comparable to the coupled method; however,
as we increase resolution, the coupled method can achieve $10^{-12}$ accuracy, while
convergence for the decoupled method stalls at around $10^{-7}$. This phenomenon can be
explained by the large condition number of the decoupling transform,
\begin{equation*}
    \kappa\left(\begin{bmatrix} \Omega & t_+(1-i\gamma\Lambda) \\ \Omega & t_-(1-i\gamma\Lambda) \end{bmatrix}\right)
    \approx 4.19 \times 10^4.
\end{equation*}
For all three methods, the number of iterations needed does not increase as we
increase the resolution, which is as expected due to our formulations being
second-kind. Furthermore, the block preconditioner reduces the iteration count
significantly with little added cost. Roughly speaking, the decoupled method
requires similar number of GMRES iterations as the preconditioned coupled
method.

\begin{figure}[ht]
  \centering
  \includegraphics[width=0.45\textwidth]{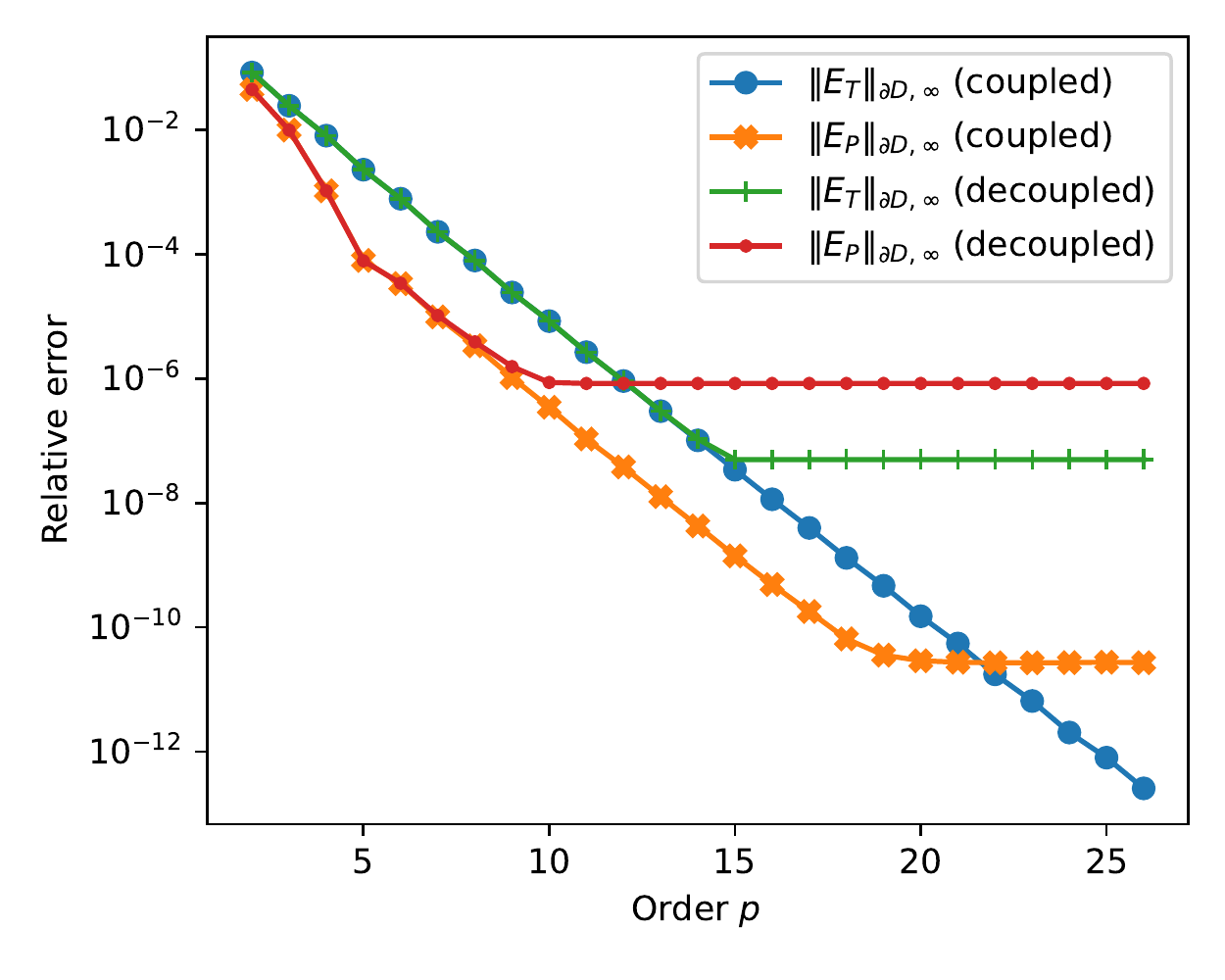}
  \includegraphics[width=0.45\textwidth]{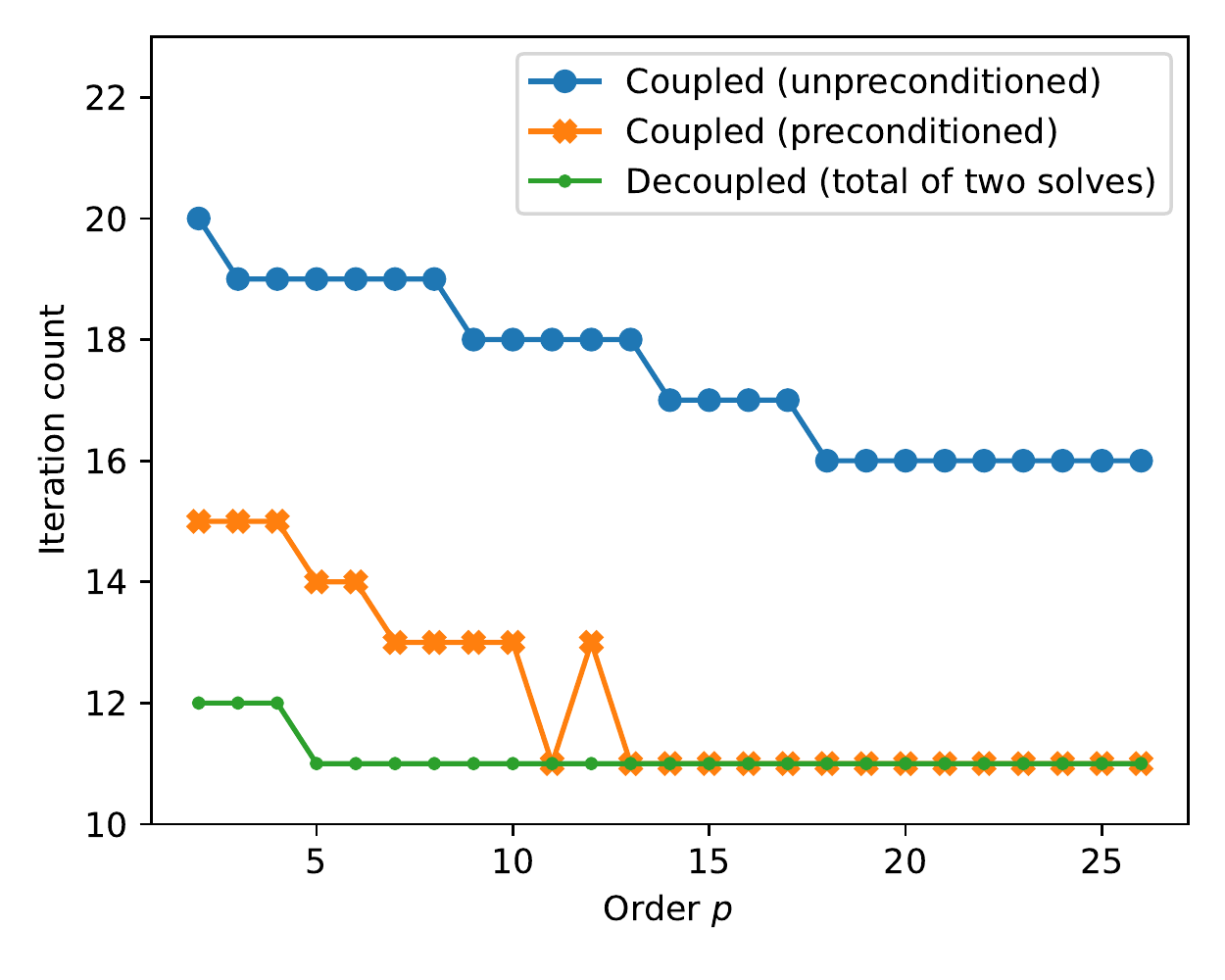}
    \caption{The coupled vs the decoupled method. (Left: convergence when increasing order.
    Right: total number of GMRES iterations.)}
  \label{fig:conv2}
\end{figure}

To further illustrate the effects of the left block preconditioner, we plot the
spectrum of the linear system with and without it in Figure \ref{fig:spectrum},
all other parameters fixed.
\reviewresponse{Reviewer 1, comment 1}
As expected, we see that the preconditioner causes the spectrum to cluster at
one location in the complex plane instead of two. This effect explains the
reduction factor of iteration counts.

\begin{figure}[ht]
  \centering
  \includegraphics[width=0.45\textwidth]{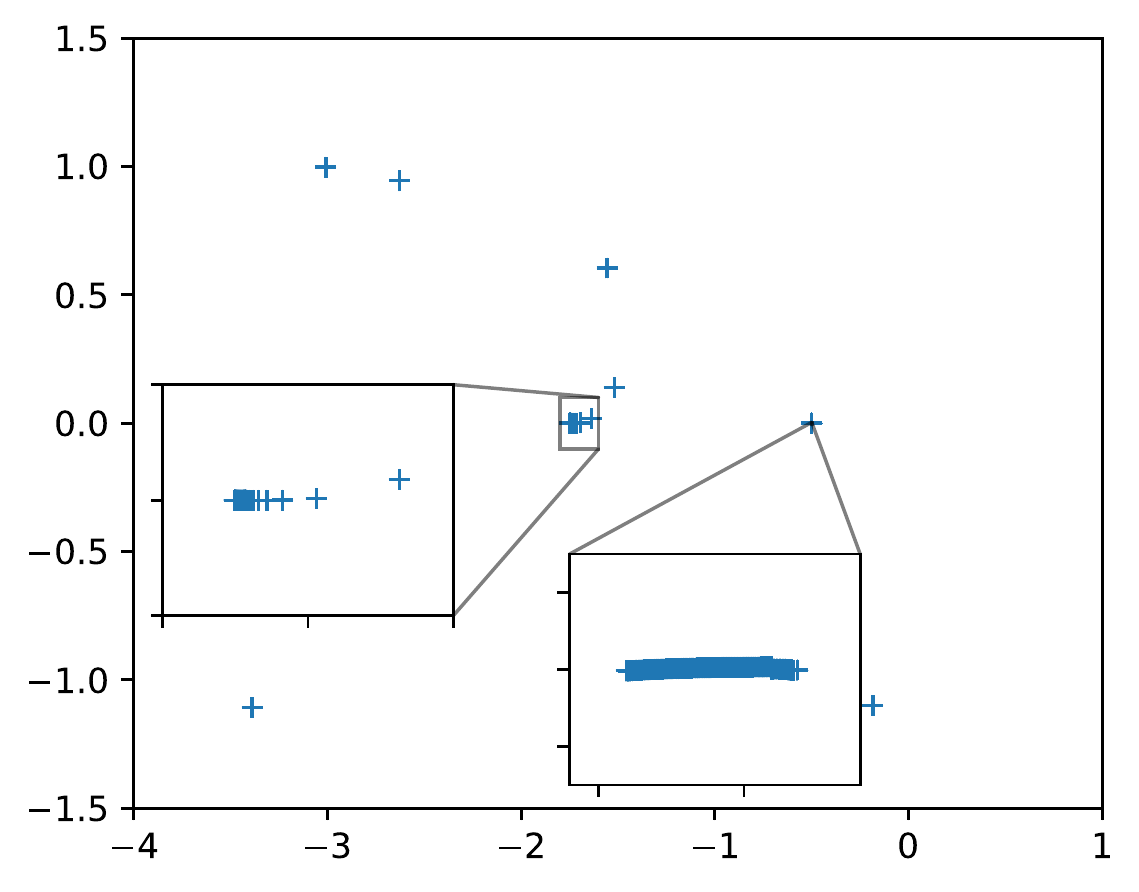}
  \includegraphics[width=0.47\textwidth]{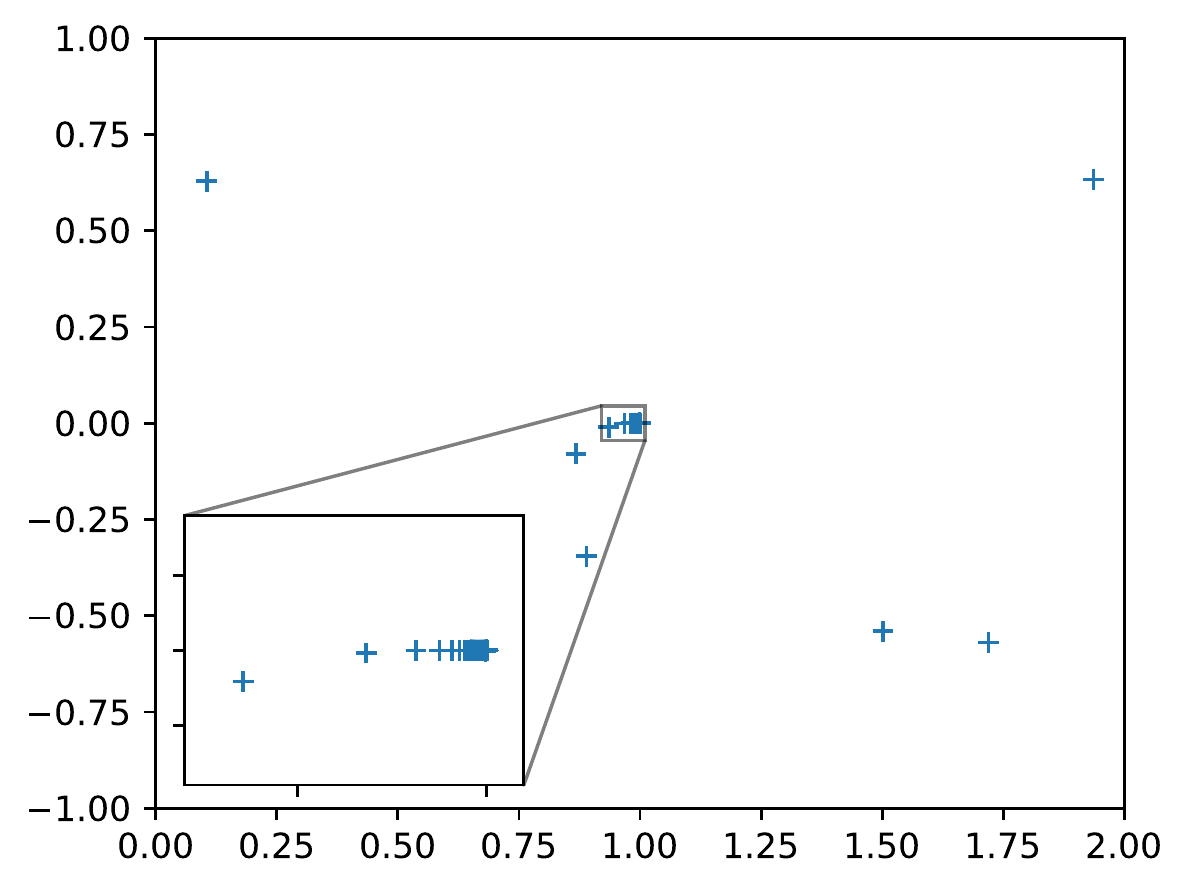}
    \caption{Spectrum plots of the coupled method over the complex plane where
    the \(x\) axis is the real part and \(y\) axis is the imaginary part.
    (Left: without left block preconditioning. Right: with left block preconditioning.)}
  \label{fig:spectrum}
\end{figure}

\subsection{The projection method for solution in the bulk region}

The projection method (\ref{eqn:projection-method}) is a variant of the decoupled method
for solution away from the boundary. To test how it performs, we
use it to find the solution over $[0, 10.5]^2 \mathbin{\backslash} D$, where
$D$ is as shown in the left plot of Figure \ref{fig:apple_project}. The red
dots marks the point sources used to produce the manufactured solution. The
lower-left half shows the relative error of the temperature field $E_T$, while
the upper-right half shows the relative error of the pressure field $E_P$. As
expected, the solution away from the boundary is accurate.

To see how the error decays with respect to the distance from boundary, in the
right plot of Figure \ref{fig:apple_project}, we plot the $l_\infty$ norms of
the relative errors over a set of equidistant curves against the curves'
distance from $\partial D$. We observe that the errors decay exponentially with
respect to the distance from boundary. For this example, the projected solution
is indistinguishable from the non-projected solution where the distance is
greater than $0.1$, which is only $1.6\%$ of the acoustic wave length.
Therefore, if the goal is to obtain solution in the bulk region, the projection
method can be very advantageous because it does not need to resolve the length
scale of the thermal boundary layer.

\begin{figure}[ht]
  \centering
  \includegraphics[width=0.45\textwidth]{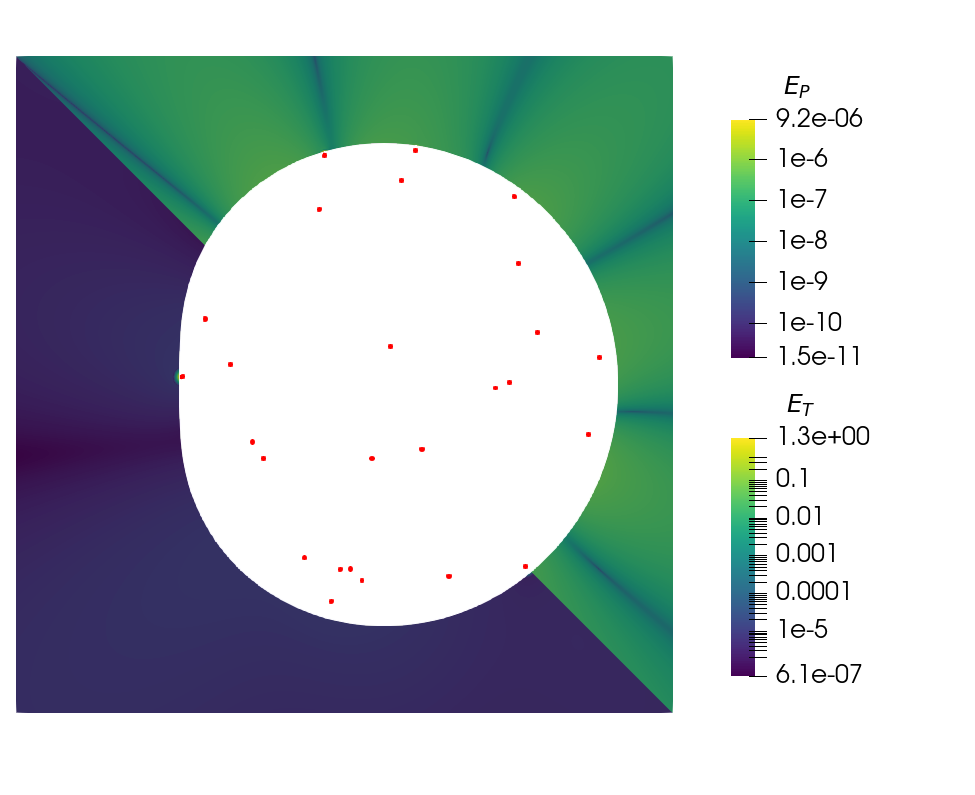}
  \includegraphics[width=0.45\textwidth]{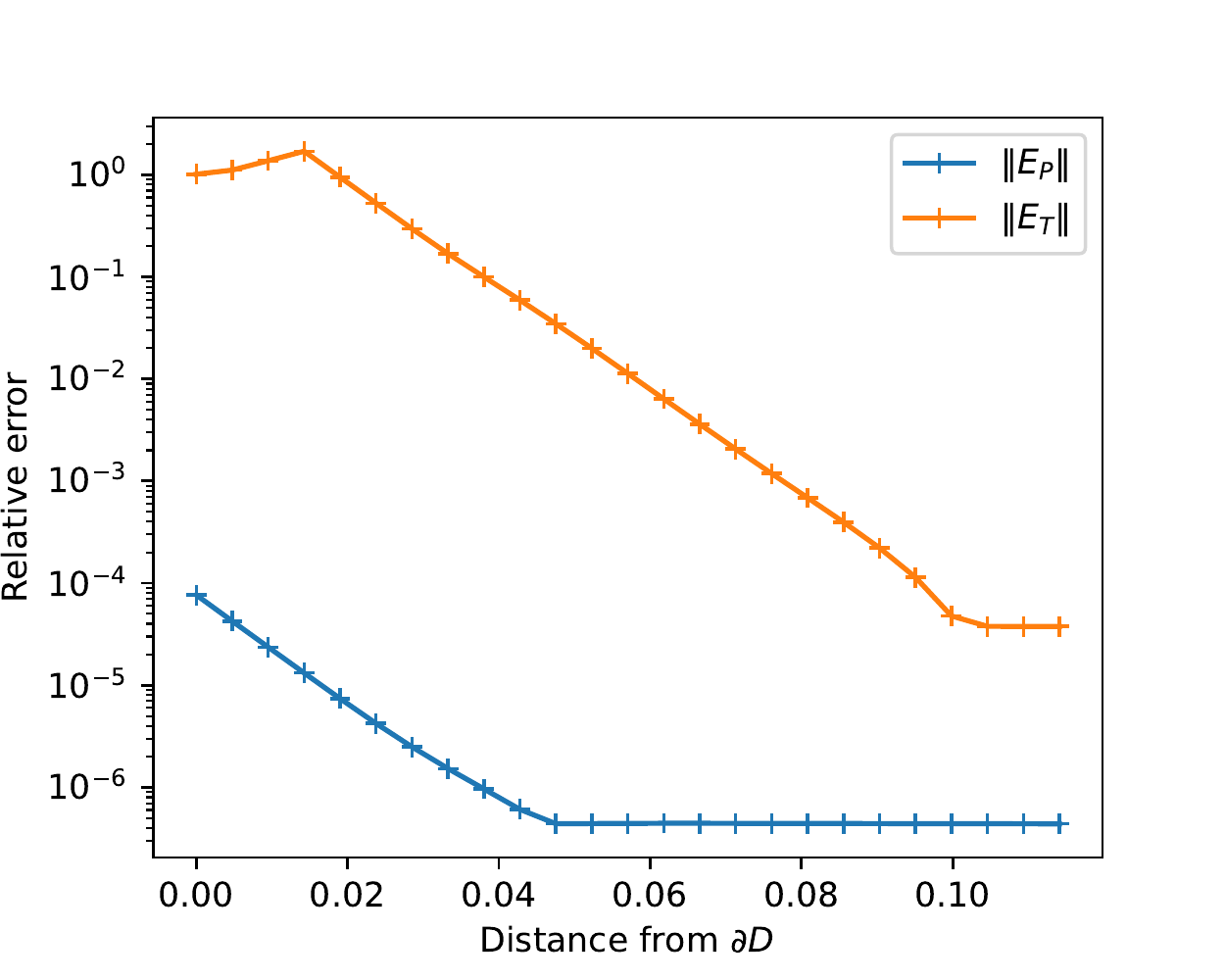}
    \caption{Error using the projection method. (Left: showing the geometry and
    relative error $E_T$ (lower-left), $E_P$ (upper-right). Right: showing that
    error decays exponentially w.r.t. the distance from the boundary.)}
  \label{fig:apple_project}
\end{figure}

It is worth pointing out that because $V_p$ is a linear combination of
temperature and pressure, by projecting the solution into only acoustic modes,
we are still solving a model with thermoacoustic coupling.

\subsection{Torus with the coupled method}
We apply the preconditioned coupled method to solve on a torus geometry with
major radius $0.7$ and minor radius $0.07$ as shown in Figure \ref{fig:torus}.
For the discretization, we set the mesh size $h=0.02$, discretization order $p=4$,
QBX order $10$, and FMM order $15$. Note that when computing layer potentials,
it is unusual to set the QBX order to be much higher than the
discretization order like this. The reason for the excessive QBX
order here is to resolve the boundary layer. The discrete system has roughly
$1.97$ million unknowns. The errors over the boundary are $\|
E_T\|_{\partial D} = 3.20\times 10^{-3}$, $\| E_P\|_{\partial D} =
1.74\times 10^{-3}$.  In Figure \ref{fig:torus}, part of the torus surface is
shown in a ``cut-off'' manner to reveal the source point that gives rise to the
manufactured solution. The rest of the torus surface shows a color map of the
residual.  And the plane surfaces behind the torus show a color map of
$\max(E_T, E_P)$ in the volume.

\begin{figure}[ht]
  \centering
  \includegraphics[width=0.6\textwidth]{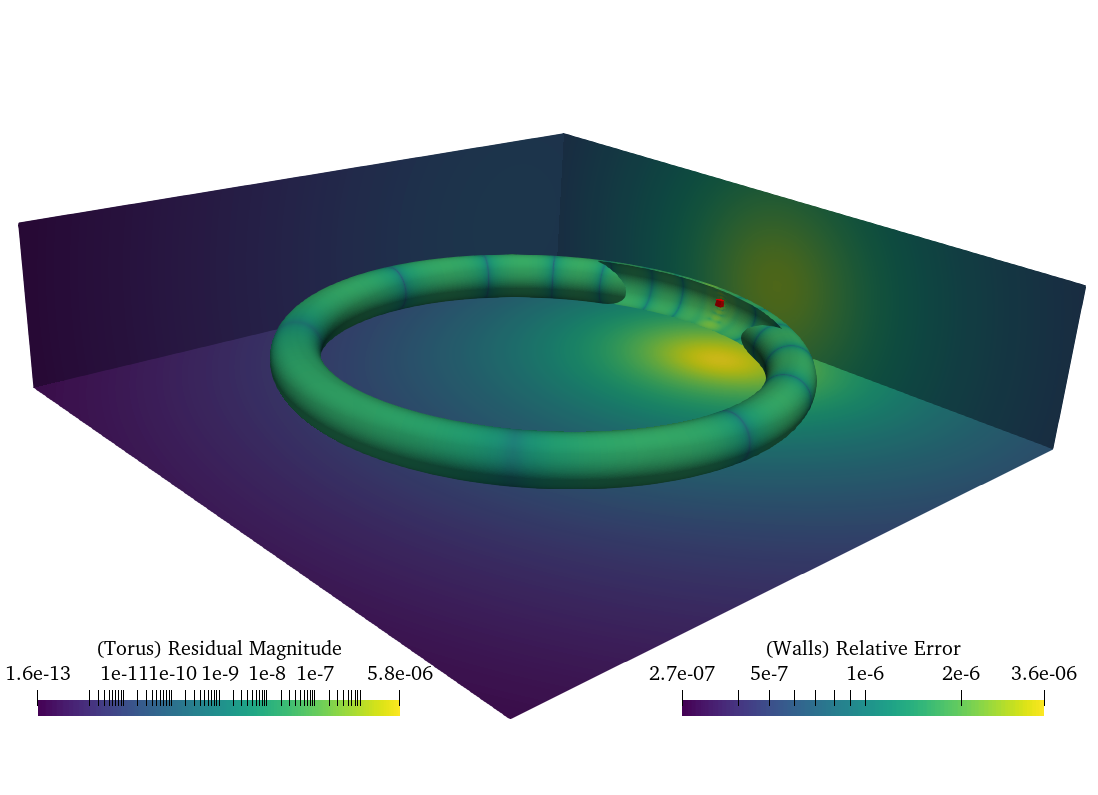}
  \caption{Torus with the coupled method.}
  \label{fig:torus}
\end{figure}

\subsection{A tuning fork geometry with the projection method}
When only the solution away from the boundary is needed, the projection method
allows solving for large problems that are otherwise too expensive to be solved
with the coupled method by circumventing the need to resolve the boundary
layer.  In this test, we solve over a tuning fork geometry as shown in Figure
\ref{fig:qtf} using the projection method. The size of the tuning fork is roughly
$1.5\times0.34\times6.23$. Since there is no boundary layer in the acoustic modes,
we use a QBX order of $2$ and FMM order $15$. The discrete system has roughly
$2.7$ million unknowns. In Figure \ref{fig:qtf}, part of the tuning fork surface is
made transparent to reveal the source points that give rise to the manufactured solution.
The remaining part of the tuning fork is colored by the residual magnitude. Also,
the plane surfaces behind the tuning fork show a color map of $\max(E_T, E_P)$
in the volume.
Although the error is large on the boundary surface ($\| E_T\|_{\partial
D} = 1.00$, $\| E_P\|_{\partial D} = 0.01$), the solution in the bulk
region has five digits of accuracy, as shown in Figure \ref{fig:qtf-err}.

\begin{figure}[ht]
  \centering
  \includegraphics[width=0.75\textwidth]{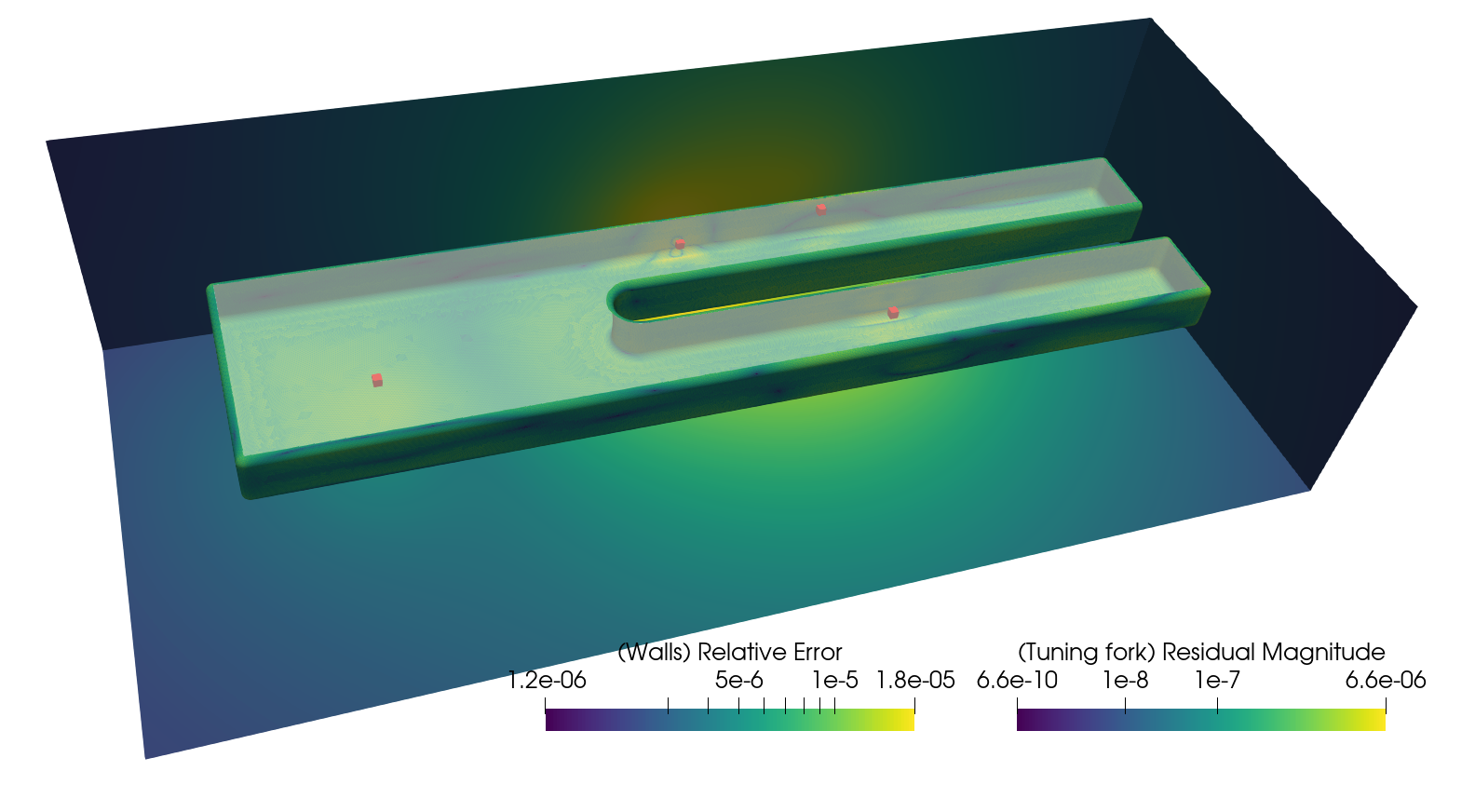}
  \caption{Tuning fork with the projection method.}
  \label{fig:qtf}
\end{figure}

\begin{figure}[ht]
  \centering
  \includegraphics[width=0.5\textwidth]{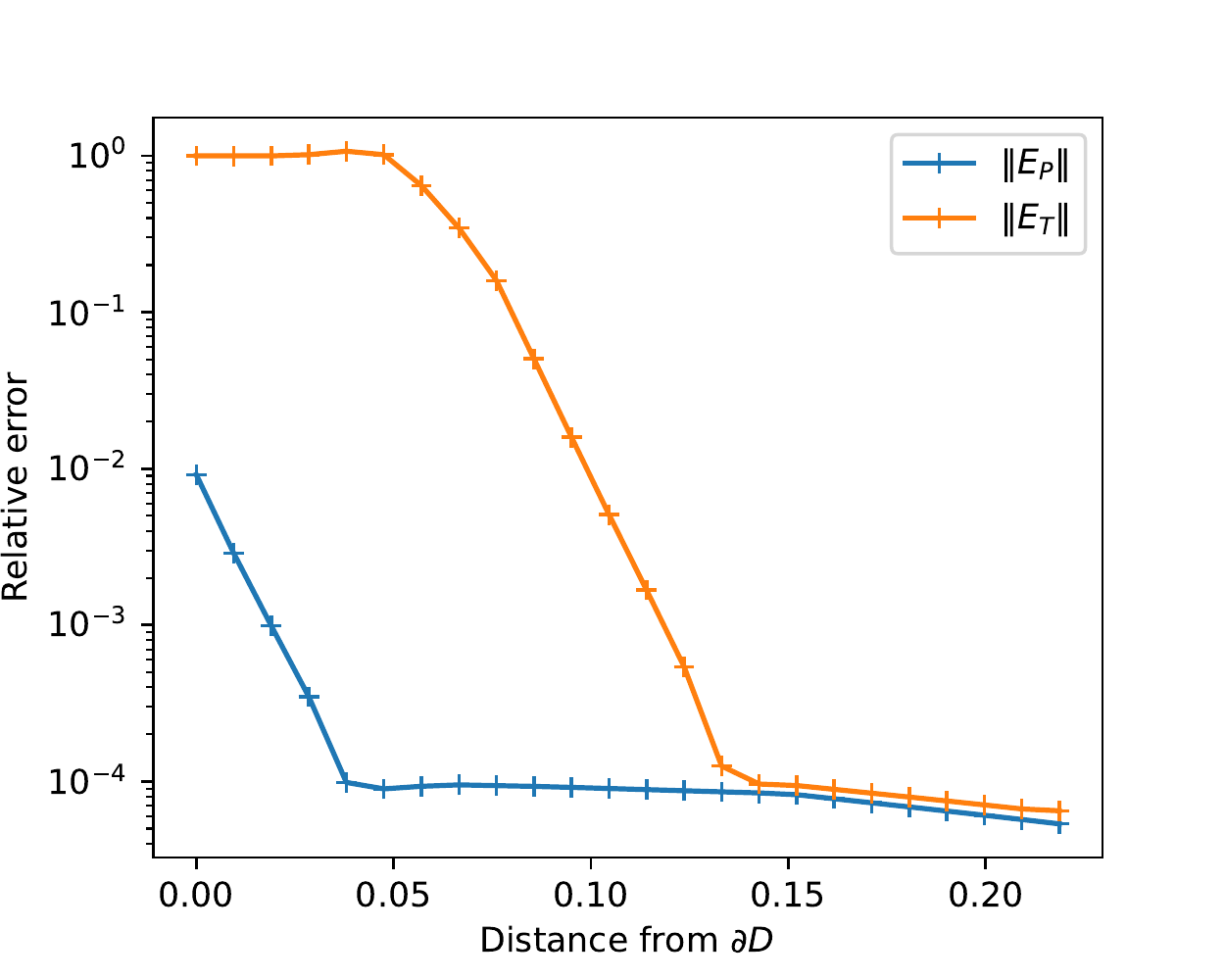}
  \caption{Error of the tuning fork solution decays with increasing distance from the boundary.}
  \label{fig:qtf-err}
\end{figure}

\section{Conclusion}
\label{sec:conclusion}

In this work, we have derived the free-space Green's function and the decoupled
PDE form of the Morse-Ingard equations. Our results are an extension of the
analysis in \cite{kaderli_analytic_2017}. Using the obtained relation between
the Morse-Ingard equations and the Helmholtz equations, we have provided
analogs of the Sommerfeld radiation conditions and Green's representation
formulae for the Morse-Ingard equations.  Based on our analysis, we have
developed three integral equations methods for the Neumann problem of the
Morse-Ingard equations in the exterior domain: the (preconditioned) coupled
method, the decoupled method, and the projection method. All three methods are
based on SKIE formulations, and can be solved with linear complexity in the number of
discretization nodes, as demonstrated in our numerical experiments using the
GIGAQBX algorithm.

Through numerical tests using realistic parameter values, we have shown that
for all three methods, the number of GMRES iterations does not grow with the
problem size. Of the three methods, the coupled method is the best-conditioned and can achieve more than $11$ digits of accuracy with
sufficient resolution.  In comparison, the decoupled method requires less
computational cost, but it is prone to the ill-conditioning of the decoupling
transform, which depends on the equation parameters. The projection method represents a
trick applied to the decoupled method that can be used to circumvent the need
to resolve the boundary layer when only the solution away from the boundary
is desired. This is similar to the workaround used in \cite{wei_integral_2020}
for long-time simulations.

It has been repeatedly observed that layer potentials with boundary layers are
resolution-hungry and pose challenges in developing more efficient
integral-equation-based solvers for such problems. For future research, we
will look to improve the QBX method to more efficiently resolve boundary
layers.

\section{Acknowledgments}
\label{sec:ack}

We would like to thank Professor John Zweck and Doctor Artur Safin for useful
discussions on the model and boundary conditions.  Our thanks also goes to
Jacob Davis for sharing preliminary derivations related to the Green’s
function.

The first two authors' research was supported by the National Science Foundation
under award numbers SHF-1911019 and OAC-1931577 as well as by the
Department of Computer Science at the University of Illinois at
Urbana-Champaign.  The third author was supported by the National Science Foundation under award numbers SHF-1909176.

\section*{References}
\label{sec:org0b55d8a}
\bibliography{refs}
\end{document}